\newtheorem{thm}{Theorem}[section]
\newtheorem{lem}[thm]{Lemma}
\newtheorem{prop}[thm]{Proposition}
\newtheorem{cor}[thm]{Corollary}
\theoremstyle{defn}
\newtheorem{defn}{Definition}
\newtheorem*{notn}{Notation}
\newcommand{\gc}{ [ \hspace{-0.65mm} [}
\newcommand{\dc}{]  \hspace{-0.65mm} ]}
\newcommand{\vb}[1]{
{\boldsymbol #1}
}
\newcommand{\F}{\mathfrak}
\newcommand{\B}{\mathbb}
\newcommand{\hc}{\mathcal{H}}
\newcommand{\U}{U_q^+(\g)}
\newcommand{\spec}{{\rm Spec}}
\newcommand{\prim}{{\rm Prim}}
\newcommand{\g}{\mathfrak{g}}
\newcommand{\comp}{\mathbb{C}}
\def\C{\mathbb{K}}
\def\co{{\mathcal O}}
\def\oqmm13{\co_q(M_{1,3})}
\def\oqm23{\co_q(M_{2,3})}
\title{Primitive ideals in quantum Schubert cells: dimension of the strata}
\author{J.~Bell\thanks{The first named authors thank NSERC for its generous support.}, K.~Casteels\thanks{The second named authors thank NSERC for its generous support.} ~and S.~Launois\thanks{The third named author research was supported by a Marie Curie European Reintegration Grant within the $7^{\mbox{th}}$ European Community Framework Programme.}}
\long\def\symbolfootnote[#1]#2{\begingroup\def\thefootnote{\fnsymbol{footnote}}\footnote[#1]{#2}\endgroup}
\def\keywords#1{\def\@keywords{#1}}
\def\@keywords{}
\def\subjclass#1{\def\@subjclass{#1}}
\def\@subjclass{}
\date{}
\begin{document}

\maketitle

\subjclass{ {\it Primary}: 16T20,17B37, 20G42; {\it Secondary}: 17B22}
\symbolfootnote[0]{{\it 2010 Mathematics Subject Classification.}\enspace \@subjclass. }

%\keywords{}
%\symbolfootnote[0]{{\it Key words and phrases.}\enspace \@keywords.}

\begin{abstract}
The aim of this paper is to study the representation theory of quantum Schubert cells. Let $\g$ be a simple complex Lie algebra. To each element $w$ of the Weyl group $W$ of $\g$, De Concini, Kac and Procesi have attached a subalgebra $U_q[w]$ of the quantised enveloping algebra $U_q(\g)$. Recently, Yakimov showed that these algebras can be interpreted as the quantum Schubert cells on quantum flag manifolds. In this paper, we study the primitive ideals of $U_q[w]$. More precisely, it follows from the Stratification Theorem of Goodearl and Letzter that the primitive spectrum of $U_q[w]$ admits a stratification indexed by those primes that are invariant under a natural torus action. Moreover each stratum is homeomorphic to the spectrum of maximal ideals of a torus. The main result of this paper gives an explicit formula for the dimension of the stratum associated to a given torus-invariant prime.
\end{abstract}
%%%%%%%%%%%%%%%

\section{Introduction}
%%%%%%%%%%%%%%%%%

Let $\g$ be a simple Lie algebra of rank $n$ over the field of complex numbers, and let $\pi:=\{\alpha_1,\dots,\alpha_n\}$ be the set of simple roots associated to a triangular decomposition $\g=\mathfrak{n}^- \oplus \mathfrak{h} \oplus \mathfrak{n}^+$. To any element $w$ in the Weyl group $W$ of $\g$ corresponds a nilpotent Lie algebra $\mathfrak{n}_w:= \mathfrak{n}^+ \cap \mathrm{Ad}_{w}(\mathfrak{n}^-) $, where $\mathrm{Ad}$ stands for the adjoint action. De Concini, Kac and Procesi \cite{DKP} defined a quantum analogue of the enveloping algebra of this nilpotent Lie algebra by using the braid group action of $W$ on the quantised enveloping algebra $U_q(\g)$ induced by Lusztig automorphisms. The resulting (quantum) algebra is denoted by $U_q[w]$.

Recently, Yakimov \cite{Y} has given a geometrical interpretation to these algebras; he has shown that this family of algebras can actually be viewed as quantisations of algebras of regular functions on Schubert cells. This approach has allowed him to use earlier works of Joseph \cite{josephbook,jos} and Gorelik \cite{gorelik} in order to study the primes of $U_q[w]$ that are invariant under the natural action of a torus of rank $n$.

Our aim in this article is to study the representation theory of these algebras $U_q[w]$ that we refer to as {\it quantum Schubert cells}. As usual for infinite-dimensional algebras, it is a very difficult problem, and so we follow Dixmier's approach and study the primitive ideals of these algebras. 

In order to investigate the primitive ideals of various quantum algebras, Goodearl and Letzter have developed a strategy based on the rational action of a torus. Indeed, building on works of Hodges-Levasseur \cite{hl1,hl2}, Hodges-Levasseur-Toro \cite{hlt}, Joseph \cite{josephbook,jos}, Brown-Goodearl \cite{bgtrans}, etc.,  Goodearl and Letzter \cite{gl2} have described a stratification of the prime and primitive spectra of an algebra $A$ supporting a rational action of a torus $H$. In the context of quantum Schubert cells, this stratification of the prime spectrum is  parametrised by those prime ideals that are invariant under this torus action and each stratum is homeomorphic to the prime spectrum of a commutative Laurent polynomial ring over the base field. Torus-invariant primes ideals of $U_q[w]$ have recently been studied by M\'eriaux and Cauchon \cite{CM} on one hand and by Yakimov \cite{Y} on the other hand.  In particular, let us point out that they all proved that torus-invariant primes in $U_q[w]$ are in one-to-one correspondence with the initial Bruhat interval $[\mathrm{id},w]$. Moreover, using earlier works of Joseph and Gorelik,  and the interpretation of $U_q[w]$ as quantum Schubert cells, Yakimov has described efficient generating sets for these ideals.  

One of the nice features of the Stratification of Goodearl and Lezter is that this stratification gives an efficient criterion to recognise primitive ideals. Indeed, by the Stratification Theorem of Goodeal and Lezter, primitive ideals are are exactly those primes that are maximal within their strata.  So, in order to study the primitive spectrum of $U_q[w]$, it is crucial to understand 
the strata, and in particular the dimension of these spaces. This is especially important because, by analogy with the Nullstellensatz in the commutative setting, primitive ideals correspond to the points of the ``quantum Schubert cells''. 

In previous work \cite{BL}, the first and third named authors gave a non-efficient formula for the dimension of a stratum. 
More precisely, if $v,w \in W$ with $v \leq w$ in the Bruhat ordering, then we described a matrix $M(v,w)$ such that the dimension of the stratum associated to $v$ in $U_q[w]$ is actually equal to the dimension of the kernel of this matrix $M(v,w)$ that we describe now. 

If $w= s_{i_1} \circ \cdots \circ s_{i_t}$ $(i_j \in \{1, \dots , n\})$ is a reduced decomposition of $w$ and $\beta_1 = \alpha_{i_1}$, $\beta_2 = s_{i_1}(\alpha_{i_2})$, $\ldots$, $\beta_t = s_{i_1} \circ \cdots \circ s_{i_{t-1}}(\alpha_{i_t})$ 
are the distinct positive roots associated to this reduced decomposition, then the matrix $M( \mathrm{id},w)$ is the following skew-symmetric matrix:
$$M( \mathrm{id},w)= \left( \begin{array}{ccccc}
0 & \langle \beta_1 , \beta_2 \rangle &  \dots & \dots &  \langle \beta_1 , \beta_t \rangle \\
 - \langle \beta_1 , \beta_2 \rangle & 0 & \ddots & &  \vdots\\
 \vdots & \ddots & \ddots& \ddots & \vdots\\
 \vdots &  &\ddots &0&  \langle \beta_{t-1} , \beta_t \rangle\\
  -\langle \beta_1 , \beta_t \rangle & \dots & \dots & - \langle \beta_{t-1} , \beta_t \rangle & 0\\
\end{array}
\right),$$
and in general $M(v,w)$ is a square submatrix of $M(\mathrm{id},w)$---this matrix will be described more precisely later in Section \ref{primeUqw}. Hence, in type $A_n$, the size of $M(\mathrm{id},w)$ can be as large as ${n \choose 2 } \times  {n \choose 2}$, whereas the dimension of any stratum is less than or equal to $n$. This example shows how ineffective the formula established in \cite{BL} is. 

The aim of this paper is to prove an efficient formula for the dimension of a stratum. Roughly speaking, our main theorem can be formulated as follows:

\begin{thm}
Let $v \leq w$. The dimension of the stratum associated to $v$ in $U_q[w]$ is equal to the dimension of $\ker (v+w)$. 
\end{thm}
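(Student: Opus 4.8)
The plan is to reduce the statement to a purely linear-algebraic identity and then to prove that identity by a telescoping computation in the reflection representation $E=\bigoplus_{i=1}^{n}\mathbb{R}\alpha_{i}$ of $W$, on which the $W$-invariant bilinear form $\langle\,,\rangle$ is positive definite. By the result of \cite{BL} recalled above, the dimension of the stratum associated to $v$ in $U_q[w]$ equals $\dim\ker M(v,w)$, where $M(v,w)$ is the principal submatrix of the skew-symmetric matrix $M(\mathrm{id},w)$ attached to a fixed reduced decomposition $w=s_{i_{1}}\cdots s_{i_{t}}$, with rows and columns indexed by a set $K_{v}$ of positions determined by $v$, of cardinality $\ell(w)-\ell(v)$ (for $v=\mathrm{id}$ one has $K_{v}=\{1,\dots,t\}$, i.e. $M(\mathrm{id},w)$ itself). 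Thus it suffices to prove $\dim\ker M(v,w)=\dim\ker(v+w)$, the right-hand kernel being taken in $E$.

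\emph{The case $v=\mathrm{id}$.} Put $w_{j}=s_{i_{1}}\cdots s_{i_{j}}$, so that $\beta_{j}=w_{j-1}(\alpha_{i_{j}})$ and $s_{\beta_{j}}=w_{j-1}\,s_{i_{j}}\,w_{j-1}^{-1}$, whence $w=s_{\beta_{t}}s_{\beta_{t-1}}\cdots s_{\beta_{1}}$. Telescoping $\mathrm{id}-w=\sum_{j=1}^{t}(\mathrm{id}-s_{\beta_{j}})\,s_{\beta_{j-1}}\cdots s_{\beta_{1}}$ and using $s_{\beta}(x)=x-\langle x,\beta^{\vee}\rangle\beta$ together with $w_{j-1}(\alpha_{i_{j}}^{\vee})=\beta_{j}^{\vee}$ and the $W$-invariance of the form yields, for every $x\in E$,
\[
(\mathrm{id}-w)(x)=\sum_{j=1}^{t}\langle x,\alpha_{i_{j}}^{\vee}\rangle\,\beta_{j},\qquad\text{i.e.}\qquad \mathrm{id}-w=\Phi\circ\Psi,
\]
where $\Psi\colon E\to\mathbb{R}^{t}$ sends $x$ to $(\langle x,\alpha_{i_{j}}^{\vee}\rangle)_{1\le j\le t}$ and $\Phi\colon\mathbb{R}^{t}\to E$ sends the $j$-th standard basis vector to $\beta_{j}$. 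A short argument shows that $\Phi$ restricts to a linear isomorphism $\ker(\Psi\Phi-2\cdot\mathrm{id}_{\mathbb{R}^{t}})\xrightarrow{\ \sim\ }\ker(\mathrm{id}+w)$: if $(\mathrm{id}+w)(x)=0$ then $\Phi\Psi(x)=(\mathrm{id}-w)(x)=2x$, so $x=\Phi(\tfrac12\Psi x)$ with $\tfrac12\Psi x\in\ker(\Psi\Phi-2\cdot\mathrm{id})$, while well-definedness and injectivity are immediate. Hence $\dim\ker(\mathrm{id}+w)=\dim\ker(\Psi\Phi-2\cdot\mathrm{id})$, and it remains to compare the $t\times t$ matrices $\Psi\Phi-2\cdot\mathrm{id}$ and $M(\mathrm{id},w)$. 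Writing $(\Psi\Phi)_{jk}=\langle\beta_{k},\alpha_{i_{j}}^{\vee}\rangle=\langle w_{j-1}\beta_{k},\beta_{j}^{\vee}\rangle$ and comparing with $M(\mathrm{id},w)_{jk}=\mathrm{sgn}(k-j)\langle\beta_{j},\beta_{k}\rangle$, one checks that these two matrices have the same row space, hence the same kernel. This settles $v=\mathrm{id}$.

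\emph{The general case.} For arbitrary $v\le w$ the same scheme should apply, but one must now run the telescoping while keeping track of the partial products $v_{(j)}$ of the subexpression of $v$ singled out in the definition of $M(v,w)$, so that the contributions of the positions lying in that subexpression cancel and only the indices in $K_{v}$ survive; after the manipulation above this produces the operator $v+w$ on $E$ on one side and the submatrix $M(v,w)$ on the other.

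\emph{Main obstacle.} I expect the general case to be the crux: one has to verify that the ``skipped'' positions contribute trivially and that the surviving identity reassembles into $v+w$ (rather than, say, $v-w$ or $\mathrm{id}+v^{-1}w$ up to a correction term), which requires the combinatorics underlying the description of $M(v,w)$ (positive subexpressions / Cauchon diagrams) — there is no clean reduction to $U_q[v^{-1}w]$ in general, since $\ell(v^{-1}w)$ need not equal $\ell(w)-\ell(v)$. By contrast, once the telescoping identity $\mathrm{id}-w=\Phi\Psi$ is in hand the case $v=\mathrm{id}$ is essentially bookkeeping, the only genuinely computational point being the comparison of the row spaces of $\Psi\Phi-2\cdot\mathrm{id}$ and $M(\mathrm{id},w)$.
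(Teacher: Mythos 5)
Your proposal correctly identifies the starting point (the telescoping factorization $\mathrm{id}-w=\Phi\Psi$ for $v=\mathrm{id}$, which is exactly what De~Concini--Procesi use and exactly what the paper's Lemma~3.3 and Lemma~3.4 reduce to when $\Delta=\emptyset$), and your diagnosis of where the real difficulty lies is accurate. But the write-up has two genuine gaps, one small and one fatal.

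The small gap is in the $v=\mathrm{id}$ case itself: you assert that the matrices $\Psi\Phi-2\cdot\mathrm{id}$ and $M(\mathrm{id},w)$ ``have the same row space, hence the same kernel'' and dismiss it as bookkeeping, but this is not a tautology. The two matrices are quite different-looking (one has nonzero diagonal, the other is skew-symmetric), and showing that their kernels agree is essentially the content of the theorem in this special case. The paper circumvents this comparison entirely: it proves the precise relation $B\circ(w+w^\Delta)=-M(w^\Delta)\circ\Psi$ (Corollary~3.5), introduces the auxiliary map $f(x,y)=M(w^\Delta)(x)+B(y)$, shows $f$ is surjective so $\dim\ker f=n$, exhibits $n$ explicit vectors in $\ker f$, and proves they are linearly independent; both the isomorphism and its dimension count then fall out at once (Proposition~3.6 and Theorem~3.7). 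There is no step where one needs to compare row spaces of two $m\times m$ matrices directly.

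The fatal gap is that you leave the general case as a conjecture. You write ``the same scheme should apply'' and flag it as ``the crux,'' correctly noting that one cannot simply reduce to $U_q[v^{-1}w]$ because $\ell(v^{-1}w)$ need not equal $\ell(w)-\ell(v)$ --- but identifying the obstacle is not resolving it. The general case is precisely the contribution of the paper. What actually makes it work is a generalized telescoping: writing $w^\Delta=w_0w_1\cdots w_m$ and $w=w_0s_{j_1}w_1\cdots s_{j_m}w_m$, one defines $u_\ell^\Delta$ (with a factor $s_{j_\ell}-\mathrm{id}$) and $v_\ell^\Delta$ (with $s_{j_\ell}+\mathrm{id}$), and proves (Lemma~3.3) that $w+w^\Delta=v_\ell^\Delta-\sum_{h<\ell}u_h^\Delta+\sum_{h>\ell}u_h^\Delta$, together with the evaluations (Lemma~3.4) $u_\ell^\Delta(\gamma)=-\langle(\beta_{j_\ell}^\Delta)^\vee\mid w^\Delta(\gamma)\rangle\,\beta_{j_\ell}$ and $\langle v_\ell^\Delta(\gamma)\mid\beta_{j_\ell}\rangle=0$. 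In particular the factorization you use for $v=\mathrm{id}$ generalizes to $w^\Delta-w=\Phi\Psi$ where now $\Psi(\gamma)_h=\langle(\beta_{j_h}^\Delta)^\vee\mid w^\Delta(\gamma)\rangle$ and $\Phi(\vb{e}_h)=\beta_{j_h}$; this is not something one obtains by ``running the same telescoping'' naively, because the positions in $\Delta$ distort both the relevant roots ($\beta_{j_h}^\Delta$ rather than $\alpha_{j_h}$) and the reference map ($w^\Delta$ rather than $\mathrm{id}$). Without Lemmas~3.3--3.4 and the $f$-argument of Proposition~3.6, the proposal does not prove the theorem.
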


Our proof is actually constructive as we exhibit explicit inverse isomorphisms between the kernel of the matrix $M(v,w)$ and 
$\ker (v+w)$. These isomorphisms, and the Lie theoretic formulae established to construct them, are of independent interest. 
The above theorem follows easily from these isomorphisms as the dimension of the stratum associated to $v$ is equal to the dimension of the kernel of $M(v,w)$ by \cite{BL}. 

Note that this result was established in the case where $v = \mathrm{id}$ in \cite{BL} thanks to earlier results of De Concini, Kac and Procesi.  

The above theorem has been independently established by Yakimov \cite{Y2} with slightly stronger hypotheses on the base field $\mathbb{K}$ and on the parameter $q$ by using representation theoretic techniques.

Quantum matrices are special examples of quantum Schubert cells. Even in this case, our formula for the dimension of a stratum is new. In a subsequent paper \cite{BCL1}, 
we use the present results to give enumeration results regarding primitive ideals in quantum matrices. This generalises previous results of the first and third named authors \cite{BLL,BLN}.\\

From the point of view of noncommutative algebraic geometry, the algebras $U_q[w]$ can be viewed as quantisations of algebras of regular functions on Schubert cells. 
In other words, these algebras should be thought of as algebras of regular functions on ``quantum Schubert cells''---some unknown spaces. The above theorem allows us to describe, set-theoretically at least, these varieties over an algebraically closed field. In this case, if we think of primitive ideals as the points of these varieties (by analogy with the Nullstellensatz), the ``quantum Schubert cell'' associated to $w \in W$ is the following set:
$$\bigsqcup_{v\leq w} (\mathbb{K}^*)^{\dim (\ker(v+w))}.$$

%Q: Shall we be more precise and describe the matrix in the case where v=id?\\

%Yakimov told me a while ago that he obtains the same formula. I suspect that this is  via very different methods.Moreover Y has certainly to restrict his work to char 0 and q trans. We don't!\\

Throughout this paper, we use the following conventions. 
\begin{enumerate}
\item[$(i)$]
 If $I$ is a finite set, $|I|$ denotes its cardinality.  
\item[$(ii)$]  $\gc a,b \dc := \{ i\in{\mathbb N} \mid a\leq i\leq b\}$ for all integers $a$ and $b$, and for any natural number $t$, we set $[t]:=\{1, \dots , t\}=\gc 1,t \dc$. 
\item[$(iii)$] $\mathbb{K}$ denotes a field and we set
$\mathbb{K}^*:=\mathbb{K}\setminus \{0\}$. 
\item[$(iv)$] $q \in \C^*$ is not a root of unity. 
\item[$(v)$] If $A$ is a $\mathbb{K}$-algebra, then $\spec(A)$ and  $\prim(A)$ denote respectively its prime and primitive spectra.
\end{enumerate}

\section{Prime and primitive spectra of $U_q[w]$}

\subsection{Notation}

Let $\g$ be a simple Lie $\comp$-algebra of rank $n$. We denote by $\pi=\{\alpha_1,\dots,\alpha_n\}$ the set of simple roots
associated to a triangular decomposition $\g=\mathfrak{n}^- \oplus \mathfrak{h} \oplus \mathfrak{n}^+$. Recall that $\pi$ is a basis of a Euclidean vector space $E$ over $\mathbb{R}$, whose inner product is denoted by $\langle \mbox{ },\mbox{ } \rangle$ ($E$ is usually denoted by $ \mathfrak{h}_{\mathbb{R}}^*$ in Bourbaki). Recall that the scalar product of two roots $\langle \alpha,\beta \rangle$ is always an integer. We assume that the short roots have length $\sqrt{2}$, and we denote by $Q$ the root lattice of $\g$, that is:
 $$Q:=\bigoplus_{i=1}^n \B{Z}\alpha_i.$$

We denote by  $W$ the Weyl group of $\g$, that is, the subgroup of the orthogonal group of $E$ generated by the reflections $s_i:=s_{\alpha_i}$, for $i \in \{1,\dots,n\}$, with reflecting hyperplanes $H_i:=\{\beta \in E \mid (\beta,\alpha_i)=0\}$, $i \in \{1,\dots,n\}$. The length of $w \in W$ is denoted  by $l(w)$.  

We denote by $A=(a_{ij}) \in M_n(\mathbb{Z})$ the Cartan matrix associated to these data. As $\g$ is simple, $a_{ij} \in \{0,-1,-2,-3\}$ for all $i \neq j$. 
 
 If $\alpha$ is a root of $\g$, then we denote by $\alpha^\vee$ its associated co-root; recall that $$\alpha^\vee := \frac{2}{\langle\alpha\mid\alpha\rangle}\alpha.$$ 
We denote by $\varpi_1, \dots , \varpi_n$ the fundamental weights of $\g$; recall that these are the elements in the dual space of $E$ defined by 
$$\langle\varpi_j\mid\alpha_i^\vee\rangle= \delta_{i,j}.$$  
We denote by $P$ the associated weight lattice, that is 
$$P:=\bigoplus_{i=1}^n \B{Z}\varpi_i.$$ 
Recall  that for all $j$, we may write $$\alpha_j= \sum_{i}a_{i,j}\varpi_i.$$

\subsection{Quantised enveloping algebras}

For all $i \in \{1,\dots,n \}$, set
$q_i:=q^{\frac{(\alpha_i,\alpha_i)}{2}}$ and 
$$\left[ \begin{array}{l} m \\ k \end{array} \right]_i:=
\frac{(q_i-q_i^{-1}) \dots
  (q_i^{m-1}-q_i^{1-m})(q_i^m-q_i^{-m})}{(q_i-q_i^{-1})\dots
  (q_i^k-q_i^{-k})(q_i-q_i^{-1})\dots (q_i^{m-k}-q_i^{k-m})} $$
for all integers $0 \leq  k \leq  m$. By convention, 
$$\left[ \begin{array}{l} m \\ 0 \end{array} \right]_i:=1.$$

The quantised enveloping algebra $U_q(\g)$ of $\g$ over $\comp$ associated to
the previous data is the $\mathbb{K}$-algebra generated by the
indeterminates $E_1,\dots,E_n,F_1,\dots , F_n,K_1^{\pm 1}, \dots, K_n^{\pm 1}$ subject to the following relations:
$$K_i K_j =K_j K_i $$
$$ K_i E_j K_i^{-1}=q_i^{a_{ij}}E_j  \mbox{ and }  K_i F_j K_i^{-1}=q_i^{-a_{ij}}F_j$$
$$E_i F_j -F_jE_i=\delta_{ij} \frac{K_i-K_i^{-1}}{q_i-q_i^{-1}} $$
and the quantum Serre relations:
\begin{eqnarray}
\label{Serrequantique} 
\sum_{k=0}^{1-a_{ij}} (-1)^k  \left[ \begin{array}{c} 1-a_{ij} \\ k
 \end{array} \right]_i E_i^{1-a_{ij} -k} E_j E_i^k=0  \mbox{ } (i \neq  j)
\end{eqnarray}
and 
$$\sum_{k=0}^{1-a_{ij}} (-1)^k  \left[ \begin{array}{c} 1-a_{ij} \\ k
 \end{array} \right]_i F_i^{1-a_{ij} -k} F_j F_i^k=0  \mbox{ } (i \neq  j).$$

We refer the reader to \cite{bgbook,jantzen,josephbook} for more details on
this (Hopf) algebra. Further, as usual, we denote by $U_q^+(\g)$ (resp. $U_q^-(\g)$)  the
subalgebra of $U_q(\g)$ generated by $E_1,\dots,E_n$
(resp. $F_1,\dots,F_n$) and by $U^0$ the subalgebra of $U_q(\g)$ generated by 
$K_1^{\pm 1},\dots, K_n^{\pm 1}$.

\subsection{Braid group action and quantum Schubert cells}

To each reduced decomposition of the longest element of the Weyl group $W$ of $\g$, Lusztig has associated a PBW basis of $\U$, see for instance \cite[Chapter 37]{lusztigbook}, \cite[Chapter 8]{jantzen} or \cite[I.6.7]{bgbook}. The construction relates to a braid group action by automorphisms on $\U$. We use the convention of \cite[Chapter 8]{jantzen}. In particular, for any $\alpha \in \pi$, we define the braid automorphism $T_{\alpha}$ of the algebra $U_q(\g)$ as in 
\cite[p. 153]{jantzen}. We set $T_i:=T_{\alpha_i}$. It was proved by Lusztig that the automorphisms $T_{i}$ satisfy the braid relations, that is, if $s_is_j$ has order $m$ in $W$, then $$T_iT_jT_i \dots = T_j T_i T_j \dots ,$$
where there are exactly $m$ factors on each side of this equality.

Consider any $w\in W$, and set $t := l(w)$. Let $w= s_{i_1} \circ \cdots \circ s_{i_t}$ $(i_j \in \{1, \dots , n\})$ be a reduced decomposition of $w$. It is well known that
$\beta_1 = \alpha_{i_1}$, $\beta_2 = s_{i_1}(\alpha_{i_2})$, $\ldots$, $\beta_t = s_{i_1} \circ \cdots \circ s_{i_{t-1}}(\alpha_{i_t})$ 
are distinct positive roots and that the set $\{\beta_1, ..., \beta_t\}$ does not depend on the chosen reduced expression of $w$. 
Similarily, we define elements $E_{\beta_k}$ of $U_q(\g)$ by
$$E_{\beta_k}:= T_{i_1} \cdots T_{i_{k-1}} (E_{i_k}).$$
Note that the elements $E_{\beta_k}$ depend on the reduced decomposition of $w$. The following well-known results were proved by Lusztig and Levendorskii-Soibelman.

\begin{thm}[See for instance \cite{LevSoi}]
\label{theofond}
$ $
\begin{enumerate}
 \item For all $k \in \{ 1, \dots, t\}$, the element $E_{\beta_k}$ belongs to $\U$.
 \item If $\beta_k=\alpha_i$, then $E_{\beta_k}=E_i$.
 \item For all $1 \leq i < j \leq t$, we have 
 $$E_{\beta_j} E_{\beta_i} -q^{-(\beta_i , \beta_j)} E_{\beta_i} E_{\beta_j}=
 \sum a_{k_{i+1},\dots,k_{j-1}} E_{\beta_{i+1}}^{k_{i+1}} \cdots E_{\beta_{j-1}}^{k_{j-1}},$$
 where each $a_{k_{i+1},\dots,k_{j-1}} $ belongs to $\C$.
\end{enumerate}
\end{thm}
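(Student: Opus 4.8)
\textbf{Proof strategy for Theorem \ref{theofond}.}

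The plan is to treat the three assertions in order, deducing them from the general theory of Lusztig's PBW bases together with the fact that the braid automorphisms $T_i$ restrict to the appropriate subalgebras. For part $(1)$, the key input is that for each $i$ the automorphism $T_i$ sends $E_j$ into $\U$ whenever $j \neq i$, and more generally that $T_{i_1}\cdots T_{i_{k-1}}$ maps $E_{i_k}$ into $\U$ precisely when $\beta_k = s_{i_1}\cdots s_{i_{k-1}}(\alpha_{i_k})$ is a positive root --- which is exactly the combinatorial content of $s_{i_1}\cdots s_{i_t}$ being a reduced decomposition. Concretely, one shows by induction on $k$ that $E_{\beta_k}\in\U$: the inductive hypothesis handles $T_{i_1}\cdots T_{i_{k-2}}(E_{i_{k-1}})\in\U$, and since $\ell(s_{i_1}\cdots s_{i_{k-1}})=k-1$ one has $s_{i_1}\cdots s_{i_{k-2}}(\alpha_{i_{k-1}}) > 0$, so applying the standard fact that $T_{i}(U_q^+\cap T_i^{-1}U_q^+)\subseteq U_q^+$ (equivalently, that $T_i$ preserves $\U$ on the subspace spanned by monomials not involving $E_i$ maximally) yields the claim. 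This is the formulation given in \cite[\S 8.14, 8.20--8.24]{jantzen}, and I would simply cite it.

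For part $(2)$, suppose $\beta_k=\alpha_i$ for some simple root $\alpha_i$. Then $s_{i_1}\cdots s_{i_{k-1}}(\alpha_{i_k})=\alpha_i$. Since $w' := s_{i_1}\cdots s_{i_{k-1}}$ is reduced of length $k-1$ and $w'(\alpha_{i_k}) = \alpha_i > 0$, one can argue that in fact $w' = s_{i_1}\cdots s_{i_{k-1}}$ must fix $\alpha_i$ up to the relevant reflection structure; the cleanest route is to note $E_{\beta_k} = T_{w'}(E_{i_k})$ where $T_{w'} := T_{i_1}\cdots T_{i_{k-1}}$ depends only on $w'$ (by the braid relations), and then invoke the general formula: if $w'(\alpha_j)=\alpha_i$ is simple then $T_{w'}(E_j)=E_i$. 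This again is standard and proved in \cite[Ch.~8]{jantzen}; alternatively it follows from uniqueness of the PBW generator attached to a given positive root in a fixed reduced word. I would present this as a one-line deduction.

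Part $(3)$ is the Levendorskii--Soibelman straightening formula and is the substantive point. The plan is induction on $j - i$. The base case $j = i+1$: apply the braid automorphism $T_{i_i}^{-1}\cdots T_{i_1}^{-1}$ (or rather, reduce to the rank-two situation) --- more precisely, set $\sigma := T_{i_1}\cdots T_{i_{i-1}}$, so that $E_{\beta_i} = \sigma(E_{i_i})$ and $E_{\beta_{i+1}} = \sigma T_{i_i}(E_{i_{i+1}})$; applying $\sigma^{-1}$ it suffices to establish the relation $T_{i_i}(E_{i_{i+1}})\,E_{i_i} - q^{-(\alpha_{i_i},\beta)}E_{i_i}\,T_{i_i}(E_{i_{i+1}}) = 0$ where $\beta = s_{i_i}(\alpha_{i_{i+1}})$, which is a direct computation inside the rank-two subalgebra generated by $E_{i_i}, E_{i_{i+1}}$ using the explicit form of $T_{i_i}$ and the quantum Serre relation \eqref{Serrequantique}. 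For the inductive step, one uses that the PBW monomials $E_{\beta_1}^{k_1}\cdots E_{\beta_t}^{k_t}$ form a basis of $\U$, so the product $E_{\beta_j}E_{\beta_i}$ can be re-expanded; the degree argument --- tracking the $Q$-grading, since $E_{\beta_j}E_{\beta_i}$ has weight $\beta_i+\beta_j$ --- forces any monomial appearing other than $E_{\beta_i}E_{\beta_j}$ and $E_{\beta_j}E_{\beta_i}$ to be supported on $\beta_{i+1},\dots,\beta_{j-1}$, and convexity of the root ordering rules out $\beta_{<i}$ or $\beta_{>j}$. The leading coefficient $q^{-(\beta_i,\beta_j)}$ is pinned down by applying the automorphism $\sigma$ and comparing with the base case, or by a direct $K_\lambda$-eigenvalue computation. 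The main obstacle is organizing this expansion cleanly without re-deriving the entire PBW machinery; since all of it is available in \cite{LevSoi} and \cite[Ch.~8]{jantzen}, in practice I would state parts $(1)$--$(3)$ with a pointer to those references and only sketch the rank-two base case of $(3)$ explicitly, as that is the one genuinely computational ingredient.
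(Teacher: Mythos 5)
The paper offers no proof of this theorem at all: it is stated as a classical result of Lusztig and Levendorskii--Soibelman with a pointer to \cite{LevSoi}, which is exactly what you ultimately propose to do, so your approach matches the paper's. One caution if you ever wrote out part $(3)$ in full: in the inductive step, weight homogeneity plus convexity of the root order do not by themselves force the expansion of $E_{\beta_j}E_{\beta_i}$ to be supported on $E_{\beta_{i+1}},\dots,E_{\beta_{j-1}}$ --- the real work (done in \cite{LevSoi} and in \cite[Ch.~8]{jantzen} via the characterisation of $T_w(U_q^+)\cap U_q^+$) is showing that this product lies in the subalgebra spanned by the PBW monomials in $E_{\beta_i},\dots,E_{\beta_j}$, and one must also take care not to presuppose the PBW basis of $U_q[w]$ itself, since that is usually deduced \emph{from} the straightening relations.
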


We denote by $U_q[w]$ the subalgebra of $\U$ generated by $E_{\beta_1}, \dots, E_{\beta_t}$.  It is well known that $U_q[w]$ does not depend on the reduced decomposition of $w$ (see \cite{DKP}). Moreover, the monomials $E_{\beta_1}^{k_1} \cdots E_{\beta_t}^{k_t}$, with $k_1, \dots, k_t \in \mathbb{N}$, form a linear basis of $U_q[w]$ (see \cite{DKP}).
As a consequence of this result, $U_q[w]$ can be presented as an iterated Ore extension over $\C$: 
$$U_q[w]= \C [E_{\beta_1}] [E_{\beta_2}; \sigma_2 , \delta_2] \cdots [E_{\beta_t}; \sigma_t , \delta_t],$$
 where each $\sigma_i$ is a linear automorphism and each $\delta_i$ is a $\sigma_i$-derivation of the appropriate 
 subalgebra. In other words, $U_q[w]$ is a skew polynomial ring whose multiplication is defined by:
 $$E_{\beta_j} a = \sigma_j(a) E_{\beta_j} + \delta_j(a)$$
 for all $j\in \gc 2 ,t \dc$ and $a \in \C [E_{\beta_1}] [E_{\beta_2}; \sigma_2 , \delta_2] \cdots [E_{\beta_{j-1}}; \sigma_{j-1} , \delta_{j-1}]$. Note that we don't have an explicit formula for $\delta_j$ in general. However, we have one for $\sigma_j$:
 $$\sigma_j(E_{\beta_i})=q^{-(\beta_i , \beta_j)} E_{\beta_i} $$
 for all $i<j$.
 
 As a corollary of this presentation as a skew-polynomial algebra, $U_q[w]$ is a noetherian domain and its group of invertible elements is reduced to nonzero elements of the base-field.

% The algebras $U_q[w]$ are referred to as quantum Schubert cells because of a recent result of Yakimov who shows that these algebras are isomorphic to 

 \subsection{Torus action on quantum Schubert cells}
 
It is well known that the torus $\hc:=(\mathbb{K}^*)^n$ acts rationally by automorphisms on $\U$ via:
$$(h_1,\dots, h_n).E_i = h_i E_i \mbox{ for all } i \in \{1,\dots,n\}.$$
(It is easy to check that the quantum Serre relations are preserved by the group $\hc$.) It is also well known that this action of $\hc$ on $\U$ restricts to a rational action of $\hc$ on $U_q[w]$.

\subsection{Prime and primitive spectra}
\label{primeUqw}

Recall that if $A$ is a $\mathbb{K}$-algebra, then $\spec(A)$ and  $\prim(A)$ denote respectively its prime and primitive spectra. First, as $q$ is not a root of unity, it follows from  \cite{gletpams} that all prime ideals in $U_q[w]$ are completely prime (see also \cite[Th\'eor\`eme 6.2.1]{c1}).

We follow the general strategy of Goodearl and Letzter to study the prime and primitive spectra of $U_q[w]$; this strategy relies very much on the rational action of the torus $\hc$. More precisely, let $\hc$-$\spec(U_q[w])$ be the set of those prime ideals that are invariant under $\hc$. For each $J \in  \hc$-$\spec(U_q[w])$, let $\spec_J(U_q[w])$ be the so-called $\hc$-stratum associated to $J$, that is:
$$\spec_J(U_q[w]) = \{ P \in \spec(U_q[w])  \mid  \bigcap_{h \in \hc} h.P=J\}.$$
Then it follows from the Stratification Theorem of Goodearl and Letzter (see for instance \cite[II.2]{bgbook}) that the $\hc$-strata form a partition of the prime spectrum of $U_q[w]$:

$$\spec(U_q[w]) =\bigsqcup_{J \in  \hc \mbox{-}\spec(U_q[w])} \spec_J(U_q[w]).$$

Recently and independently, $\hc$-primes in $U_q[w]$ have been studied by M\'eriaux-Cauchon and 
Yakimov. M\'eriaux and Cauchon \cite{CM}  proved that $\hc$-$\spec(U_q[w])$ is in one-to-one correspondence with 
the initial Bruhat interval $\{v\in W  \mid  v \leq w\}$. Yakimov \cite{Y} subsequently proved that $\hc$-$\spec(U_q[w])$ and $\{v\in W  \mid  v \leq w\}$ are isomorphic as posets, and provides generating sets for all $\hc$-primes. Let us point out that the parametrisations obtained by M\'eriaux-Cauchon on one hand, and Yakimov on the other hand, are obtained via very different methods and to the best of our knowledge it is not known whether these two parametrisations coincide.

From the point of view of representation theory, it is important to be able to describe the primitive ideals of the quantum Schubert cell $U_q[w]$. To achieve this, the Stratification Theorem is a powerful tool; indeed, it follows from \cite[Theorem II.8.4]{bgbook} that the primitive ideals are exactly those primes that are maximal within their $\hc$-strata. Of course, this provides an easy criterion to recognise primitive ideals. Moreover the geometric nature of the $\hc$-strata is known; each $\hc$-stratum is homeomorphic to the prime spectrum of a commutative Laurent polynomial ring over $\C$. One crucial piece of information that is missing is in fact the dimension of this commutative Laurent polynomial ring. 

In \cite{BL}, a formula was established for this dimension; it was expressed as the dimension of the kernel of a certain matrix. 
Before giving more details, let us point out that this formula is not really effective as for instance in type $A_n$ this matrix can be of size ${ n\choose 2} \times { n\choose 2} $ whereas it was observed that the dimension of the kernel never exceeds $n$.
  
Let us now recall the formula that has been obtained in \cite{BL}. This formula relies on the parametrisation of the $\hc$-prime spectrum of $U_q[w]$ obtained by M\'eriaux-Cauchon. Recall that $w= s_{i_1} \circ \cdots \circ s_{i_t}$ $(i_{\ell} \in \{1, \dots , n\})$ is a fixed reduced decomposition of $w\in W$. In \cite{c1}, Cauchon used his deleting-derivations algorithm in order to construct an embedding $\varphi$ from  $\hc$-$\spec(U_q[w])$ into the power set $\mathcal{P}_t$ of $[t]$ of all subsets of $[t]$. We call any subset of $[t]$ a {\it diagram}. Before describing the image of this embedding we need to introduce some notation.

\begin{notn} {\rm 
Fix a reduced decomposition $w= s_{i_1} \circ \cdots \circ s_{i_t}$ $(i_j \in \{1, \dots , n\})$ of $w$ and a diagram $\Delta\subseteq\{1,2,\ldots,t\}$. 
\begin{enumerate}
\item For all $k\in [t]$, we set \[s_{i_k}^{\Delta}:=\left\{ \begin{array}{ll} s_{i_k} & {\rm if~}k\in \Delta, \\{\rm id} & {\rm otherwise.}\end{array}\right. \]
\item We set $\{t_1< \ldots < t_m\}:=\{1, \dots, l(w)\} \setminus\Delta$ and 
$j_{\ell}:=i_{t_{\ell}}$ for all $\ell \in [m]$.
\item We also set  $$w^\Delta:=s_{i_1}^\Delta\cdots s_{i_t}^\Delta\in W,$$
$$v^\Delta:=s_{i_t}^\Delta\cdots s_{i_1}^\Delta\in W$$
and 
$$v_{\ell}^\Delta:=s_{i_t}^\Delta\cdots s_{i_{t-\ell+1}}^\Delta\in W$$
for all $\ell \in \{0, \dots , t\}$.
\end{enumerate}}
\end{notn}

\begin{defn}
{\rm A diagram $\Delta$ is {\it Cauchon (with respect to the fixed reduced decomposition of $w$)} if 
$$v_{\ell-1}^{\Delta} < v_{\ell-1}^{\Delta} s_{i_{t-\ell+1}} $$
for all  $\ell \in \{1, \dots , t\}$.}
\end{defn}

In \cite{CM}, Cauchon diagrams are called admissible diagrams. It was proved by M\'eriaux and Cauchon \cite{CM} that they coincide 
with positive subexpressions of the reduced decomposition of $w$ in the sense of Marsh and Rietsch \cite{MR}. 

Recently, the image of this embedding $\varphi: \hc \mbox{-}\spec(U_q[w]) \rightarrow \mathcal{P}_t$ was described by M\'eriaux and Cauchon; they proved that $\varphi$ induces a one-to-one correspondence between $\hc$-$\spec(U_q[w])$ and the set of Cauchon diagrams associated to the chosen  reduced decomposition of $w$ \cite[Theorem 5.3.1]{CM}. 

As their bijection is actually obtained through the deleting-derivations theory, the results obtained in \cite{BL} can be applied. In particular, with the above notation, we deduce from \cite[Theorem 3.1 and Section 3C]{BL} the following formula for the dimension of an $\hc$-stratum.

\begin{thm}[\cite{BL}]
\label{theoBL}
Let $J$ be the unique $\hc$-prime of $U_q[w]$ associated to the Cauchon diagram $\Delta$. Let $M(w^\Delta)$ be the $m\times m$ skew-symmetric matrix defined by setting the $(i,\ell)$-entry $(i<\ell)$ to be $\langle \beta_{j_i}|\beta_{j_{\ell}}\rangle$. Then, with the above notation, the following holds:
$$\dim (\spec_J(U_q[w]) ) = \dim (\ker(M(w^\Delta))).$$
\end{thm}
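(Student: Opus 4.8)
The statement to be proved is Theorem~\ref{theoBL}, but since this is explicitly attributed to \cite{BL}, the task here is really to explain how the formula $\dim(\spec_J(U_q[w])) = \dim(\ker M(w^\Delta))$ follows from the machinery already in place. The plan is to combine three ingredients: (i) the Cauchon/deleting-derivations description of the $\hc$-prime $J$ associated to the diagram $\Delta$; (ii) the general principle that, after applying the deleting-derivations algorithm, the quotient $U_q[w]/J$ localizes to a quantum torus whose defining matrix is read off from the commutation relations of the surviving variables; and (iii) the fact that for a quantum affine space (or quantum torus) defined by a multiplicatively antisymmetric matrix $\Lambda = (q^{\lambda_{ij}})$, the dimension of the corresponding $\hc$-stratum of the prime spectrum equals $\dim \ker(\lambda_{ij})$ over $\mathbb{Q}$.

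First I would recall precisely what Cauchon's theory gives: starting from the iterated Ore extension presentation $U_q[w] = \C[E_{\beta_1}][E_{\beta_2};\sigma_2,\delta_2]\cdots[E_{\beta_t};\sigma_t,\delta_t]$ with $\sigma_j(E_{\beta_i}) = q^{-(\beta_i,\beta_j)}E_{\beta_i}$ for $i<j$ (from Theorem~\ref{theofond}(3)), the deleting-derivations algorithm produces a quantum affine space $\overline{U_q[w]} = \C[Y_1,\dots,Y_t]$ in which $Y_j Y_i = q^{-(\beta_i,\beta_j)} Y_i Y_j$ for $i<j$, together with a bijection between $\hc$-$\spec(U_q[w])$ and $\hc$-$\spec$ of this quantum affine space that preserves the stratum dimension. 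Under this bijection, the diagram $\Delta$ records which variables $Y_k$ lie in the corresponding prime: $Y_k \in \overline{J}$ iff $k \in \Delta$. Hence $\overline{U_q[w]}/\overline{J} \cong \C[Y_k : k \notin \Delta]$ is again a quantum affine space, on the variables indexed by $\{t_1<\dots<t_m\}$, with commutation matrix having $(i,\ell)$-exponent $-(\beta_{j_i},\beta_{j_\ell})$ — that is, $-M(w^\Delta)$ up to the overall sign, which is irrelevant for the kernel.

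Next I would invoke the standard computation of strata dimensions for quantum affine spaces: for $\co_\lambda(\C^m)$ with $Y_\ell Y_i = q^{\lambda_{i\ell}} Y_i Y_\ell$, the zero ideal's stratum is homeomorphic to $\spec$ of a Laurent polynomial ring in $\dim_\BbQ \ker(\lambda)$ variables, because localizing at all the $Y_i$ gives a quantum torus whose center has transcendence degree $\dim\ker(\lambda)$, and the $\hc$-stratum of $\langle 0\rangle$ is controlled by this center (this is in \cite{bgbook}, II.2 and the treatment of quantum tori). Applying this to $\overline{U_q[w]}/\overline{J}$ with $\lambda_{i\ell} = -\langle\beta_{j_i}|\beta_{j_\ell}\rangle$ for $i<\ell$ gives $\dim(\spec_{\overline{J}}) = \dim\ker(M(w^\Delta))$, and transporting back through Cauchon's stratum-preserving bijection yields the claim. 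The only genuinely delicate point — and the one I would expect to be the main obstacle if writing this from scratch rather than citing \cite{BL} — is verifying that the deleting-derivations algorithm does not alter the leading commutation exponents of the surviving variables (so that the matrix is literally built from the root pairings $\langle\beta_{j_i}|\beta_{j_\ell}\rangle$ rather than something perturbed), and that the algorithm's bijection on $\hc$-primes is compatible with taking the quotient by $J$; both are established in \cite{c1} and \cite{BL}, so here one simply cites \cite[Theorem 3.1 and Section 3C]{BL}.
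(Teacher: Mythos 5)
The paper itself offers no proof of this theorem; it simply cites \cite[Theorem 3.1 and Section 3C]{BL} after noting that the M\'eriaux--Cauchon bijection is obtained through the deleting-derivations theory. Your reconstruction of the underlying argument — passing via deleting derivations to a quantum affine space with exponent matrix $\pm M(w^\Delta)$, identifying the $\hc$-prime attached to $\Delta$ with the ideal generated by the $Y_k$, $k\in\Delta$, and then applying the standard computation that the stratum of the zero ideal in the resulting quantum torus has dimension $\dim_{\mathbb{Q}}\ker$ of the exponent matrix — is accurate and is precisely the route taken in the cited reference.
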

Observe that when $w$ is the longest word in type $A_n$ and $\Delta$ is the empty diagram (which is easily seen to be a Cauchon diagram) then  $M(w^\Delta)$ is a matrix of size ${ n\choose 2} \times { n\choose 2} $. However it was proved by De Concini and Procesi that the kernel of this matrix is actually isomorphic to the kernel of an $n \times n$ matrix \cite[Lemma 10.4 and 10.6]{DP}. This is their result that we will generalise in the next section.

\section{Dimension of the strata}

In this section we fix a reduced decomposition $$w=s_{i_1}s_{i_2}\cdots s_{i_t}$$ of $w \in W$. Our aim is to compute the dimension of the kernel of the matrix $M(w^\Delta)$ introduced in Theorem \ref{theoBL} for any diagram $\Delta$.

\subsection{Notation and main theorem}
\label{not1}

We start by fixing the notation.

\begin{notn} {\rm Throughout the next sections, we set the following notation.
\begin{enumerate}
\item Fix $\Delta\subseteq\{1,2,\ldots,t\}$,  set $\{t_1< \cdots < t_m\}=\{1, \dots, l(w)\} \setminus\Delta$ and 
$j_{\ell}:=i_{t_{\ell}}$ for all $\ell \in [m]$.
\item For all $k\in [t]$, we set \[s_{i_k}^{\Delta}:=\left\{ \begin{array}{ll} s_{i_k} & {\rm if~}k\in \Delta, \\
{\rm id} & {\rm otherwise.}\end{array}\right. \]
\item $w^\Delta:=s_{i_1}^\Delta\cdots s_{i_t}^\Delta\in W$.
\item $\beta_1^\Delta := \alpha_{i_1}$ and for $k>1$, $$\beta_k^\Delta = s_{i_1}^\Delta\cdots s_{i_{k-1}}^\Delta(\alpha_{i_k}).$$ 
\end{enumerate}}
\end{notn}

Note that we do not need to assume that $\Delta $ is a Cauchon diagram.

Let $M(w^\Delta)$ be the $m\times m$ skew-symmetric matrix defined by setting the $(i,\ell)$-entry $(i<\ell)$ to be
$\langle \beta_{j_i}|\beta_{j_{\ell}}\rangle$; this is the matrix introduced in Theorem \ref{theoBL}.  We regard $M(w^\Delta)$ as a map from $\mathbb{Q}^m$ to itself.  Recall that our aim is to compute the kernel of this matrix. 

Set $V:=\sum_{i=1}^n \mathbb{Q}\cdot \varpi_i \ = \ P\otimes_{\mathbb Z} \mathbb{Q}$, and consider the map $w+w^\Delta:V \rightarrow V$. Our main result can be stated as follows.

\begin{thm} \label{blmain}
For any word $w$ in the Weyl group and any diagram $w^\Delta$ of $w$, $$\ker(M(w^\Delta))\simeq \ker(w+w^\Delta).$$
\end{thm}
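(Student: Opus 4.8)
The plan is to construct explicit mutually inverse linear maps between $\ker(M(w^\Delta))$ and $\ker(w+w^\Delta)$, rather than merely comparing dimensions. The natural bridge between the combinatorial data (indices $j_1,\dots,j_m$ and roots $\beta_{j_\ell}^\Delta$) and the geometric operator $w+w^\Delta$ is the partial-product factorisation $w^\Delta = s_{i_1}^\Delta\cdots s_{i_t}^\Delta$. For a vector $c=(c_1,\dots,c_m)\in\BbQ^m$ I would associate the weight-space element $\Phi(c):=\sum_{\ell=1}^m c_\ell\,\varpi_{j_\ell}$ (or a closely related combination of fundamental weights or coroots indexed by the deleted positions), and conversely, given $\mu\in V$ with $(w+w^\Delta)(\mu)=0$, I would read off the coordinates $c_\ell$ from the "increments" of the running product $s_{i_1}^\Delta\cdots s_{i_{k}}^\Delta$ applied to $\mu$ as $k$ runs through $[t]$. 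The key is that the entry $\langle\beta_{j_i}\mid\beta_{j_\ell}\rangle$ of $M(w^\Delta)$ is precisely the kind of quantity that appears when one expands $\langle \mu, (1-w^\Delta)\,\cdot\,\rangle$ telescopically over the reduced word.

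The key Lie-theoretic steps, in order, are: (1) establish a telescoping identity expressing $w^\Delta-\mathrm{id}$ (equivalently $w^\Delta + w$ after combining with the analogous identity for $w$ itself, using that $w=s_{i_1}\cdots s_{i_t}$) as a sum over $k\in[t]$ of rank-one operators built from $\beta_k^\Delta$ and $\beta_k$ — this is the standard "$s_\alpha(x)=x-\langle x,\alpha^\vee\rangle\alpha$ applied successively" computation, and it is where the roots $\beta_k^\Delta$ enter; (2) show that the terms coming from $k\in\Delta$ drop out (because $s_{i_k}^\Delta=\mathrm{id}$ there), so only the $m$ deleted positions contribute, matching the size of $M(w^\Delta)$; (3) pair the resulting expression against a test weight and recognise the matrix of the resulting bilinear form as $M(w^\Delta)$, up to an explicit change of basis by an invertible (unipotent, triangular) matrix coming from the passage between $\{\beta_{j_\ell}^\Delta\}$ and $\{\alpha_{j_\ell}\}$ or between weights and roots via the Cartan matrix; (4) conclude that the kernels have equal dimension, and then upgrade to an explicit isomorphism by checking the two maps $\Phi$ and its putative inverse compose to the identity on each side.

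I expect the main obstacle to be step (3): controlling the change of basis so that $\ker(M(w^\Delta))$ matches $\ker(w+w^\Delta)$ \emph{on the nose} rather than up to some correction. Concretely, $M(w^\Delta)$ is expressed in terms of the pairings $\langle\beta_{j_i}\mid\beta_{j_\ell}\rangle$ of the twisted roots $\beta_{j_\ell}^\Delta$, whereas the telescoping identity most naturally produces pairings involving both the untwisted $\alpha_{j_\ell}$ and the partial Weyl-group elements; reconciling these requires a careful bookkeeping lemma of the form $\beta_{j_\ell}^\Delta = (\text{partial product})\,\alpha_{j_\ell}$ together with the invariance $\langle wx\mid wy\rangle=\langle x\mid y\rangle$ of the form, and then verifying that the two "triangular" substitutions (one on $\BbQ^m$, one on $V$) intertwine $M(w^\Delta)$ with the restriction of $w+w^\Delta$ to an appropriate subspace. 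The other delicate point is that $w+w^\Delta$ acts on the $n$-dimensional space $V$ while $M(w^\Delta)$ is $m\times m$ with possibly $m>n$; the isomorphism must therefore factor $w+w^\Delta$ through a map whose image has controlled dimension, which I anticipate handling by showing $\mathrm{im}(w^\Delta-\mathrm{id})$, or the relevant piece, is spanned by exactly the $\{\beta_{j_\ell}^\Delta : \ell\in[m]\}$ (possibly after quotienting by the fixed space), so that the rank counts agree and the rank–nullity theorem closes the argument on both sides.
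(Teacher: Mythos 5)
Your overall strategy---telescope $w+w^\Delta$ into rank-one operators supported on the deleted positions, read off coordinates from partial products via $s_\alpha(x)=x-\langle x,\alpha^\vee\rangle\alpha$, and build explicit inverse maps---does match the opening moves of the actual proof. But two essential ideas are missing, and without them the plan does not close. The first is in the telescoping itself. If you telescope na\"ively, writing $w-w^\Delta=\sum_{h}u_h^\Delta$ where $u_h^\Delta=w_0s_{j_1}\cdots w_{h-1}(s_{j_h}-\mathrm{id})w_h\cdots w_m$, and hence $w+w^\Delta=2w^\Delta+\sum_h u_h^\Delta$, then upon pairing against $\beta_{j_\ell}$ the diagonal term $h=\ell$ contributes $-\langle(\beta_{j_\ell}^\Delta)^\vee\mid w^\Delta(\gamma)\rangle\,\langle\beta_{j_\ell}\mid\beta_{j_\ell}\rangle=-2\langle\beta_{j_\ell}^\Delta\mid w^\Delta(\gamma)\rangle$, while the $2w^\Delta$ term gives $2\langle w^\Delta(\gamma)\mid\beta_{j_\ell}\rangle$. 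These do \emph{not} cancel in general, because $\beta_{j_\ell}$ (built from the untruncated prefix $w_0s_{j_1}\cdots w_{\ell-1}$) and $\beta_{j_\ell}^\Delta$ (built from $w_0w_1\cdots w_{\ell-1}$) are genuinely different roots once $\Delta\neq\emptyset$. The paper's Lemma~\ref{lem: id} circumvents this with an $\ell$-dependent decomposition $w+w^\Delta=v_\ell^\Delta-\sum_{h<\ell}u_h^\Delta+\sum_{h>\ell}u_h^\Delta$ in which the extra operator $v_\ell^\Delta$ carries $s_{j_\ell}+\mathrm{id}$ at the $\ell$-th slot; since $\tfrac12(s_{j_\ell}+\mathrm{id})$ is the orthogonal projection onto $\alpha_{j_\ell}^\perp$, one gets $\langle v_\ell^\Delta(\gamma)\mid\beta_{j_\ell}\rangle=0$ for free and the diagonal vanishes cleanly. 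This $v_\ell^\Delta$ device and its $\ell$-dependent sign pattern are the crux, and your sketch does not anticipate them.

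The second gap concerns the proposed inverse and the closing argument. The map $\Phi(c)=\sum_\ell c_\ell\varpi_{j_\ell}$ cannot be right: the indices $j_\ell$ repeat (a reduced word reuses simple reflections), the formula does not see $w^\Delta$ at all, and the correct inverse is $\Phi(\vb{e}_h)=\tfrac12(w^\Delta)^{-1}(\beta_{j_h})$, a pulled-back positive root rather than anything indexed by fundamental weights or coroots. More importantly, your plan to finish via rank--nullity on $\mathrm{im}(w^\Delta-\mathrm{id})$ is not how one handles the mismatch $m>n$. The paper instead introduces the auxiliary map $f\colon\mathbb{Q}^m\oplus V\to\mathbb{Q}^m$, $f(x,y)=M(w^\Delta)x+\sum_\ell\langle y\mid\beta_{j_\ell}\rangle\vb{e}_\ell$, proves $f$ surjective by a triangularity computation of $f(\vb{e}_h,\beta_{j_h})$, deduces $\dim\ker f=n$, and then shows that the $n$ pairs $\bigl(\Psi(\varpi_j),(w+w^\Delta)(\varpi_j)\bigr)$ form a basis of $\ker f$---from which surjectivity and injectivity of $\Psi$ both fall out at once. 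Even the linear independence step hides a further induction: one has to show that $\langle(\beta_{j_h}^\Delta)^\vee\mid w^\Delta(\gamma)\rangle=0$ for all $h$ forces $w(\gamma)=w^\Delta(\gamma)$, by propagating $s_{j_h}w_h\cdots w_m(\gamma)=w_h\cdots w_m(\gamma)$ down the word. You have correctly flagged where the difficulty lies (your ``main obstacle'' and ``delicate point'' remarks), but the sketch stops short of supplying the mechanism---the $v_\ell^\Delta$ decomposition and the auxiliary surjection $f$---that actually overcomes it.
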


The next section consists of some technical lemmas used in the construction of an explicit isomorphism needed to prove Theorem~\ref{blmain}. We note that De Concini and Procesi~\cite[Chapter 10]{DP} have proved the special case when $\Delta=\emptyset$, i.e., $w^\Delta=\textnormal{id}$, the identity map. 

\subsection{Two technical lemmas}
\label{not2}

Keeping in mind the notation introduced in the previous section, notice that there exist (possibly empty) words 
$w_0,\ldots ,w_m\in W$ such that 
$$w^{\Delta}=w_0w_1\cdots w_m$$ and
$$w= w_0s_{j_1}w_1s_{j_2}w_2\cdots s_{j_m}w_m,$$ where we write $j_{\ell}:=i_{t_{\ell}}$.  

\begin{notn}{\rm
For $w=w_0s_{j_1}w_1s_{j_2}w_2\cdots s_{j_m}w_m$ we fix the following notation.
\begin{enumerate}
\item For all $1\le \ell \le m$,
$$u_{\ell}^{\Delta}:=w_0s_{j_1}w_1s_{j_2}\cdots w_{\ell-2}s_{j_{\ell -1}} w_{\ell-1}(s_{j_{\ell}}-{\rm id})w_{\ell}w_{\ell+1}\cdots w_m.$$
\item For all $1\le \ell \le m$,
$$v_{\ell}^{\Delta}:=w_0s_{j_1}w_1s_{j_2}\cdots w_{\ell-2}s_{j_{\ell -1}} w_{\ell-1}(s_{j_{\ell}}+{\rm id})w_{\ell}w_{\ell+1}\cdots w_m.$$
\item In order to simplify notation, we write $\beta_{j_\ell}$ instead of  $\beta_{i_{t_{\ell}}}$, and $\beta_{j_\ell}^\Delta$ instead of $\beta_{i_{t_{\ell}}}^\Delta$ for each $\ell$.
\end{enumerate}}
\end{notn}

\begin{lem} \label{lem: id}
For all $\ell\in[m]$, we have
$$w+w^{\Delta} \ = \ v_{\ell}^{\Delta} - \sum_{h<\ell} u_h^{\Delta} + \sum_{h>\ell} u_h^{\Delta}.$$
\end{lem}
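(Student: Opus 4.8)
The statement is an identity between two elements of $W$ acting on $V$ (or rather, between endomorphisms of $V$ obtained as $\mathbb{Z}$-linear combinations of elements of $W$), so I would prove it by a direct telescoping computation, working from the definitions of $u_h^\Delta$ and $v_\ell^\Delta$. The key observation is that each $u_h^\Delta$ and each $v_\ell^\Delta$ is built from the same ``template'' word $w_0 s_{j_1} w_1 s_{j_2} \cdots s_{j_m} w_m$ with exactly one factor $s_{j_h}$ replaced, respectively, by $(s_{j_h}-\mathrm{id})$ or $(s_{j_h}+\mathrm{id})$. Expanding $(s_{j_h}\pm\mathrm{id})$ by linearity, each of these is a difference (resp.\ sum) of two products of Weyl group elements: the ``full'' product with $s_{j_h}$ present, and the ``truncated'' product with $s_{j_h}$ deleted.

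The proof would proceed as follows. First, I would introduce shorthand for the two families of partial products one gets by either keeping or deleting the reflection in position $h$; concretely, for each $h\in[m]$ let
$$P_h := w_0 s_{j_1} w_1 \cdots w_{h-1}\, s_{j_h}\, w_h \cdots w_m, \qquad Q_h := w_0 s_{j_1} w_1 \cdots w_{h-1}\, w_h \cdots w_m,$$
so that $u_h^\Delta = P_h - Q_h$ and $v_h^\Delta = P_h + Q_h$ for every $h$. Note that with this notation $P_0$ (no reflections deleted, i.e.\ all $m$ reflections present) equals $w$, while $P_m$ with all reflections to the left present but... — more usefully, observe the telescoping relations: deleting $s_{j_h}$ from $P_{h}$ turns it into $Q_h$, and $Q_h$ is itself the word obtained from $P_{h-1}$ by... here one must be slightly careful, since $Q_h$ has reflections $s_{j_1},\dots,\widehat{s_{j_h}},\dots,s_{j_m}$. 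The cleanest route is to observe directly that $Q_h = Q_{h}$ where successive deletion gives a chain, but in fact the crucial identity is simply $P_h$ and $Q_h$ relate via $P_h$ having one more reflection; I would set $R_h := w_0 s_{j_1} w_1 \cdots s_{j_h} w_h$ truncated appropriately and verify $w^\Delta = Q$-type fully-deleted product and $w = P$-type fully-present product.

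Then the identity becomes a finite computation: substituting $u_h^\Delta = P_h - Q_h$ and $v_\ell^\Delta = P_\ell + Q_\ell$ into the right-hand side gives
$$v_\ell^\Delta - \sum_{h<\ell} u_h^\Delta + \sum_{h>\ell} u_h^\Delta = P_\ell + Q_\ell - \sum_{h<\ell}(P_h - Q_h) + \sum_{h>\ell}(P_h - Q_h),$$
and I would show the partial sums telescope. The point is that $Q_h = P_{h'}$-type products agree across consecutive indices: deleting $s_{j_h}$ and deleting $s_{j_{h-1}}$ from the template and then reinstating one of them links $Q_h$ to $Q_{h-1}$ through a common intermediate, so that $P_h - Q_h$ and $P_{h-1}-Q_{h-1}$ have a cancelling overlap. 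After the dust settles, the alternating partial sums collapse and one is left with exactly $w + w^\Delta$ (the ``all reflections present'' product plus the ``all reflections deleted'' product), independently of $\ell$. Since the right-hand side does not actually depend on $\ell$ once simplified, it suffices to check it for one value, e.g.\ $\ell = 1$ or by induction on $\ell$: assuming the formula for $\ell$, the formula for $\ell+1$ follows from $v_{\ell+1}^\Delta - v_\ell^\Delta = u_{\ell+1}^\Delta + u_\ell^\Delta$ (an immediate consequence of $v_h^\Delta = P_h + Q_h$, $u_h^\Delta = P_h - Q_h$, together with $Q_{\ell+1} = Q_\ell$ suitably interpreted — this is the one relation that must be checked carefully), which rearranges to the claimed step.

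\textbf{Main obstacle.} The only genuinely delicate point is bookkeeping: making precise the sense in which the ``truncated'' products $Q_h$ for consecutive $h$ are compatible, i.e.\ verifying $v_{\ell+1}^\Delta - v_\ell^\Delta = u_\ell^\Delta + u_{\ell+1}^\Delta$ (equivalently that the middle block $w_\ell$ between $s_{j_\ell}$ and $s_{j_{\ell+1}}$ cancels correctly when one reflection is present and the other absent). Once that single relation is pinned down, everything else is a mechanical induction on $\ell$ with base case $\ell=1$ obtained by expanding $v_1^\Delta = (s_{j_1}+\mathrm{id})$-product $= w + w^\Delta + (\text{correction})$ and matching against $w + w^\Delta + \sum_{h>1} u_h^\Delta$. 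I expect no conceptual difficulty beyond careful indexing, which is why the authors call this a ``technical lemma.''
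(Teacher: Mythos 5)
Your plan is the right one and at the level of approach coincides with the paper's: both identify $u_h^\Delta$ and $v_\ell^\Delta$ as, respectively, the difference and sum of two ``partial'' words, then telescope. But the slip is at exactly the point you flag as delicate, and it would have stalled the computation.

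The definitions drop \emph{all} reflections to the right of position $h$: after the $(s_{j_h}\pm\mathrm{id})$ factor, only $w_h w_{h+1}\cdots w_m$ appears, with no $s_{j_{h+1}},\ldots,s_{j_m}$. So your $P_h$ and $Q_h$ must be read as the words containing only the first $h$, resp.\ first $h-1$, reflections. Your parenthetical claim that $Q_h$ has reflections $s_{j_1},\dots,\widehat{s_{j_h}},\dots,s_{j_m}$ (all but the $h$-th) is therefore incorrect, and the relation you actually need is not ``$Q_{\ell+1}=Q_\ell$'' but $Q_{\ell+1}=P_\ell$: the truncated word at level $\ell+1$ coincides with the full word at level $\ell$. (Indeed, a direct expansion shows your proposed step $v_{\ell+1}^\Delta - v_\ell^\Delta = u_{\ell+1}^\Delta + u_\ell^\Delta$ is equivalent to $Q_{\ell+1}=P_\ell$, not to $Q_{\ell+1}=Q_\ell$.) Once $Q_h=P_{h-1}$ is in hand, no induction is needed: with $P_0=w^\Delta$ and $P_m=w$ one has $u_h^\Delta=P_h-P_{h-1}$, $v_\ell^\Delta=P_\ell+P_{\ell-1}$, and
$$v_\ell^\Delta - \sum_{h<\ell}u_h^\Delta + \sum_{h>\ell}u_h^\Delta = (P_\ell+P_{\ell-1}) - (P_{\ell-1}-P_0) + (P_m-P_\ell) = P_0+P_m = w^\Delta+w,$$
which is precisely the paper's one-line telescope, written with the intermediate partial products named rather than abbreviated.
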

\begin{proof} 
Note that
\begin{eqnarray*} &~& \sum_{h<\ell} u_h^{\Delta} \\ &=&
 \sum_{h<\ell}  \left( w_0s_{j_1}w_1\cdots w_{h-1} s_{j_h} w_h\cdots w_m - w_0s_{j_1}w_1\cdots w_{h-2} s_{j_{h-1}} w_{h-1} w_h\cdots w_m\right) \\
 &=& w_0s_{j_1}\cdots w_{\ell-2}s_{j_{\ell-1}}w_{\ell-1}w_{\ell}\cdots w_m - w_0\cdots w_m \\
 &=&  w_0s_{j_1}\cdots w_{\ell-2}s_{j_{\ell-1}}w_{\ell-1}w_{\ell}\cdots w_m-w^{\Delta}.
 \end{eqnarray*}
 Similarly,
 $$ \sum_{h>\ell} u_h^{\Delta} = w-w_0s_{j_1}\cdots w_{\ell-1}s_{j_{\ell}}w_{\ell}w_{\ell+1}\cdots w_m.$$
 Thus
 \begin{eqnarray*} &~&  \sum_{h>\ell} u_h^{\Delta} - \sum_{h<\ell} u_h^{\Delta} \\
&=& w+w^{\Delta} - w_0s_{j_1}\cdots w_{\ell-1}s_{j_{\ell}}w_{\ell}w_{\ell+1}\cdots w_m  
-  w_0s_{j_1}\cdots w_{\ell-2}s_{j_{\ell-1}}w_{\ell-1}w_{\ell}\cdots w_m \\
&=& w+w^{\Delta}  - v_{\ell}^{\Delta}.
\end{eqnarray*}
The result follows.
\end{proof}

Recall that $V=\sum_{i=1}^n \mathbb{Q}\cdot \varpi_i \ = \ P\otimes_{\mathbb Z} \mathbb{Q}$.  
\begin{lem} If $\gamma\in V$, then for all $\ell\in[m]$ we have
$$u_{\ell}^{\Delta}(\gamma) =- \langle (\beta_{j_{\ell}}^{\Delta})^{\vee}|w^{\Delta}(\gamma)\rangle \beta_{j_{\ell}}$$
and
$$ \langle v_{\ell}^{\Delta}(\gamma)|\beta_{j_{\ell}}\rangle = 0.$$
\label{lem: rangle}
\end{lem}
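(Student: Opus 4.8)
\textbf{Proof plan for Lemma \ref{lem: rangle}.}

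The plan is to prove both identities by direct computation, exploiting the factored form $w^{\Delta}=w_0w_1\cdots w_m$ and the expression of $u_\ell^\Delta$ and $v_\ell^\Delta$ given in the Notation. For the first identity, I would write $w^{\Delta}=w_0s_{j_1}w_1\cdots s_{j_{\ell-1}}w_{\ell-1}\cdot w_\ell\cdots w_m$ and set $y:=w_0s_{j_1}w_1s_{j_2}\cdots w_{\ell-2}s_{j_{\ell-1}}w_{\ell-1}$ and $z:=w_\ell w_{\ell+1}\cdots w_m$, so that $u_\ell^\Delta = y(s_{j_\ell}-\mathrm{id})z$ and $w^\Delta = yz$. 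The key computation is then the elementary reflection identity: for any $\mu\in V$,
$$(s_{j_\ell}-\mathrm{id})(\mu) \;=\; -\langle \beta_{j_\ell}^\vee\,|\,\mu\rangle\,\alpha_{j_\ell},$$
which is just the definition of the simple reflection $s_{j_\ell}$ (using the coroot pairing). Applying $y$ to both sides and using that $y$ is orthogonal gives
$$u_\ell^\Delta(\gamma) \;=\; -\langle \beta_{j_\ell}^\vee\,|\,z(\gamma)\rangle\,y(\alpha_{j_\ell}).$$
Now I would identify the two pieces: first, $y(\alpha_{j_\ell}) = w_0 s_{j_1} w_1 \cdots w_{\ell-1}(\alpha_{i_{t_\ell}})$ is, by the definition of $\beta_k^\Delta$ applied to the reduced word $w = w_0 s_{j_1}w_1\cdots s_{j_m}w_m$ with $j_\ell = i_{t_\ell}$, exactly $\beta_{j_\ell}$ (note: the un-superscripted root, built from the full word $w$, because the prefix of $w$ up to position $t_\ell$ is precisely $w_0 s_{j_1}w_1\cdots w_{\ell-1}$). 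Second, $\langle \beta_{j_\ell}^\vee \,|\, z(\gamma)\rangle$: here I want to convert $z(\gamma)$ into $w^\Delta(\gamma)$ up to something orthogonal to $\beta_{j_\ell}^\Delta$. Since $w^\Delta = y_0 z$ where $y_0 := w_0 w_1\cdots w_{\ell-1}$ (the $\Delta$-deformed prefix, with all the $s_{j_h}$ replaced by identity), and since $\beta_{j_\ell}^\Delta = y_0(\alpha_{j_\ell})$ by the definition of $\beta_k^\Delta$, orthogonality of $y_0$ gives $\langle \beta_{j_\ell}^\vee \,|\, z(\gamma)\rangle = \langle (\beta_{j_\ell}^\Delta)^\vee\,|\, y_0 z(\gamma)\rangle = \langle (\beta_{j_\ell}^\Delta)^\vee \,|\, w^\Delta(\gamma)\rangle$, where I also used that applying an orthogonal transformation to both arguments of the pairing sends coroots to coroots compatibly (i.e. $(y_0\alpha)^\vee = y_0(\alpha^\vee)$). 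Combining, $u_\ell^\Delta(\gamma) = -\langle (\beta_{j_\ell}^\Delta)^\vee \,|\, w^\Delta(\gamma)\rangle\,\beta_{j_\ell}$, as claimed.

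For the second identity, with the same notation $v_\ell^\Delta = y(s_{j_\ell}+\mathrm{id})z$, I would use the companion reflection identity
$$(s_{j_\ell}+\mathrm{id})(\mu) \;=\; 2\mu - \langle \beta_{j_\ell}^\vee \,|\, \mu\rangle\,\alpha_{j_\ell},$$
and then pair $v_\ell^\Delta(\gamma)$ with $\beta_{j_\ell} = y(\alpha_{j_\ell})$. Since $y$ is orthogonal, $\langle v_\ell^\Delta(\gamma)\,|\,\beta_{j_\ell}\rangle = \langle (s_{j_\ell}+\mathrm{id})z(\gamma)\,|\,\alpha_{j_\ell}\rangle$. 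Writing $\nu := z(\gamma)$, this is $\langle \nu\,|\,\alpha_{j_\ell}\rangle + \langle s_{j_\ell}(\nu)\,|\,\alpha_{j_\ell}\rangle$, and because $s_{j_\ell}$ is an orthogonal involution with $s_{j_\ell}(\alpha_{j_\ell}) = -\alpha_{j_\ell}$, the second term equals $\langle \nu\,|\,s_{j_\ell}(\alpha_{j_\ell})\rangle = -\langle \nu\,|\,\alpha_{j_\ell}\rangle$, so the sum vanishes. This is the cleanest route; alternatively one can note $(s_{j_\ell}+\mathrm{id})$ has image inside the hyperplane $H_{j_\ell}$ fixed... no, that is false in general, so I will stick with the pairing argument.

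The main obstacle I anticipate is purely bookkeeping: keeping straight the difference between $\beta_{j_\ell}$ (built from the full word $w$, i.e. with the $s_{j_h}$'s present in the prefix) and $\beta_{j_\ell}^\Delta$ (built from the deformed word $w^\Delta$, with those reflections deleted), and correctly matching the prefixes $y = w_0 s_{j_1}w_1\cdots s_{j_{\ell-1}}w_{\ell-1}$ versus $y_0 = w_0 w_1 \cdots w_{\ell-1}$ to the definitions in the Notation block. I would handle this by carefully indexing: position $t_\ell$ in the word $w$ corresponds to the letter $s_{j_\ell}$, everything before it (after deleting the letters indexed by $\Delta$, which are absorbed into the $w_h$'s) is $y_0$, and everything before it without deletions is $y$; then $\beta_{j_\ell} = \beta_{t_\ell} = y(\alpha_{i_{t_\ell}})$ and $\beta_{j_\ell}^\Delta = \beta_{t_\ell}^\Delta = y_0(\alpha_{i_{t_\ell}})$ follow directly from the respective definitions. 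Once this dictionary is fixed, both computations are one-line applications of the reflection identities together with orthogonality of Weyl group elements.
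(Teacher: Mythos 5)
Your proposal is correct and takes essentially the same route as the paper: factor $u_\ell^\Delta = y\,(s_{j_\ell}-\mathrm{id})\,z$ with $y = w_0s_{j_1}w_1\cdots s_{j_{\ell-1}}w_{\ell-1}$ and $z=w_\ell\cdots w_m$, apply the reflection formula to peel off a multiple of $\alpha_{j_\ell}$, identify $y(\alpha_{j_\ell})=\beta_{j_\ell}$ and $y_0(\alpha_{j_\ell})=\beta_{j_\ell}^\Delta$ for $y_0=w_0w_1\cdots w_{\ell-1}$, and use orthogonality of the Weyl group elements to transport the pairing; the paper phrases the first step as a weight-space decomposition $\gamma'=\langle\alpha_{j_\ell}^\vee|\gamma'\rangle\varpi_{j_\ell}+\delta$ rather than the explicit reflection identity, but the two are equivalent. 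One notational slip to correct: in your reflection identity and the two lines that follow you write $\beta_{j_\ell}^\vee$ where you mean $\alpha_{j_\ell}^\vee$ (the coroot of the \emph{simple} root $\alpha_{j_\ell}$); with that fixed the transport $\langle\alpha_{j_\ell}^\vee|z(\gamma)\rangle = \langle y_0(\alpha_{j_\ell}^\vee)|y_0z(\gamma)\rangle = \langle(\beta_{j_\ell}^\Delta)^\vee|w^\Delta(\gamma)\rangle$ is exactly right. Also, the alternative you dismissed for the second identity --- that $(s_{j_\ell}+\mathrm{id})(V)$ lies in $\alpha_{j_\ell}^\perp$ --- is in fact true (and is precisely how the paper argues); your pairing computation is nothing but the verification of that inclusion.
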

\begin{proof}  Recall that
 $$u_{\ell}^{\Delta}=w_0s_{j_1}w_1s_{j_2}\cdots w_{\ell-2}s_{j_{\ell -1}} w_{\ell-1}(s_{j_{\ell}}-{\rm id})w_{\ell}w_{\ell+1}\cdots w_m.$$
Let $$\gamma' = w_{\ell}w_{\ell+1}\cdots w_m(\gamma).$$
Notice that $s_{j_{\ell}}-{\rm id}$ has kernel spanned by $\{\varpi_i\mid i\neq j_{\ell}\}$. Also, notice that since $\langle \alpha_j^\vee\mid \gamma^\prime\rangle$ is equal to the coefficient of $\varpi_{j}$ in $\gamma^\prime$, we may write $$\gamma' = \langle \alpha_{j_{\ell}}^{\vee} | \gamma'\rangle \varpi_{j_{\ell}} + \delta,$$ with
$\delta$ in the kernel of $s_{j_{\ell}}-{\rm id}$.  Since $s_{j_{\ell}}-{\rm id}$ sends $\varpi_{j_{\ell}}$ to $-\alpha_{j_{\ell}}$, we see that
$$u_{\ell}^{\Delta}(\gamma) =-  \langle \alpha_{j_{\ell}}^{\vee} | \gamma'\rangle w_0s_{j_1}w_1s_{j_2}\cdots w_{\ell-2}s_{j_{\ell -1}} w_{\ell-1}(\alpha_{j_{\ell}}),$$
which is just $$-  \langle \alpha_{j_{\ell}}^{\vee} | \gamma'\rangle \beta_{j_{\ell}}.$$
Since the operators $s_{i_k}$ are isometries of $V$ and its dual space, we see that
\begin{eqnarray*} &~&  \langle \alpha_{j_{\ell}}^{\vee} | \gamma'\rangle \\ &=&
 \langle w_0\cdots w_{\ell-1}(\alpha_{j_{\ell}}^{\vee}) | w_0\cdots w_{\ell-1}(\gamma')\rangle \\
 &=&  \langle (\beta_{j_{\ell}}^{\Delta})^{\vee}|w_0\cdots w_m(\gamma)\rangle \\
 &=& \langle (\beta_{j_{\ell}}^{\Delta})^{\vee}|w^{\Delta}(\gamma)\rangle.
 \end{eqnarray*}
 Thus
 $$u_{\ell}^{\Delta}(\gamma)=- \langle (\beta_{j_{\ell}}^{\Delta})^{\vee}|w^{\Delta}(\gamma)\rangle \beta_{j_{\ell}}.$$
 
Observe that
$$ v_{\ell}^{\Delta}(\gamma)\in w_0s_{j_1}w_1s_{j_2}\cdots w_{\ell-2}s_{j_{\ell -1}} w_{\ell-1}(s_{j_{\ell}}+{\rm id})(V).$$

On the other hand, for all $k \in [n]$, we have 
\begin{eqnarray*}
\langle(s_{j_{\ell}}+{\rm id})(\varpi_k)\mid \alpha_{j_l}\rangle & = &  \langle s_{j_l}(\varpi_k)\mid \alpha_{j_l}\rangle + \langle \varpi_k\mid \alpha_{j_l}\rangle \\
& = & \langle \varpi_k\mid s_{j_l}(\alpha_{j_l})\rangle + \langle \varpi_k\mid \alpha_{j_l}\rangle \\ 
& = & -\langle \varpi_k\mid \alpha_{j_l}\rangle + \langle \varpi_k\mid \alpha_{j_l}\rangle \\ 
& = & 0.
\end{eqnarray*}
In other words, $\frac{1}{2}(s_{j_{\ell}}+{\rm id})$ is simply a projection onto the orthogonal complement of $\mathbb{Q}\alpha_{j_{\ell}}$. It follows that
$$v_{\ell}^{\Delta}(\gamma)\in w_0s_{j_1}w_1s_{j_2}\cdots w_{\ell-2}s_{j_{\ell -1}} w_{\ell-1}(\langle \alpha_{j_{\ell}}\rangle^{\perp}).$$
In particular,
$v_{\ell}^{\Delta}(\gamma)$ is orthogonal to 
$w_0s_{j_1}w_1s_{j_2}\cdots w_{\ell-2}s_{j_{\ell -1}} w_{\ell-1}(\alpha_{j_{\ell}})=\beta_{j_{\ell}}$.
This completes the proof.
\end{proof}

Let $\vb{e}_{\ell}$ be the vector in $\mathbb{Q}^m$ whose $\ell^\textnormal{th}$ coordinate is $1$ and all other coordinates are zero. For a statement $X$, we let $\chi(X)=1$ if $X$ is true, and 0 otherwise. As a result of the preceding lemmas, we obtain the next result.
\begin{cor} 
For $1\le \ell\le m$ and $\gamma\in V$ we have
$$\langle (w+w^{\Delta})(\gamma)|\beta_{j_{\ell}}\rangle \ = \ \sum_{h\neq \ell} (-1)^{\chi(h>\ell)} \langle \beta_{j_h}|\beta_{j_{\ell}}\rangle \langle (\beta_{j_{h}}^{\Delta})^{\vee}|w^{\Delta}(\gamma)\rangle.$$
\label{cor: wdel}
\end{cor}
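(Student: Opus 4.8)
The plan is to combine the two technical lemmas directly, since the statement is essentially their conjunction once one pairs with $\beta_{j_{\ell}}$. First I would fix $\ell\in[m]$ and $\gamma\in V$, apply Lemma~\ref{lem: id} to $\gamma$, and then take the inner product of both sides of the resulting identity with $\beta_{j_{\ell}}$. Using bilinearity of $\langle\;,\;\rangle$ this yields
$$\langle (w+w^{\Delta})(\gamma)|\beta_{j_{\ell}}\rangle \ = \ \langle v_{\ell}^{\Delta}(\gamma)|\beta_{j_{\ell}}\rangle \ - \ \sum_{h<\ell}\langle u_h^{\Delta}(\gamma)|\beta_{j_{\ell}}\rangle \ + \ \sum_{h>\ell}\langle u_h^{\Delta}(\gamma)|\beta_{j_{\ell}}\rangle.$$

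Next I would invoke the second identity of Lemma~\ref{lem: rangle}, namely $\langle v_{\ell}^{\Delta}(\gamma)|\beta_{j_{\ell}}\rangle=0$, to delete the first term on the right-hand side. For the two remaining sums I would substitute the first identity of Lemma~\ref{lem: rangle}, $u_h^{\Delta}(\gamma)=-\langle (\beta_{j_{h}}^{\Delta})^{\vee}|w^{\Delta}(\gamma)\rangle\,\beta_{j_h}$, so that $\langle u_h^{\Delta}(\gamma)|\beta_{j_{\ell}}\rangle=-\langle (\beta_{j_{h}}^{\Delta})^{\vee}|w^{\Delta}(\gamma)\rangle\,\langle\beta_{j_h}|\beta_{j_{\ell}}\rangle$ for each $h\neq\ell$. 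Substituting and tracking signs --- the $h<\ell$ sum carries an overall minus which cancels the minus coming from the lemma, whereas the $h>\ell$ sum retains it --- gives
$$\sum_{h<\ell}\langle\beta_{j_h}|\beta_{j_{\ell}}\rangle\langle (\beta_{j_{h}}^{\Delta})^{\vee}|w^{\Delta}(\gamma)\rangle \ - \ \sum_{h>\ell}\langle\beta_{j_h}|\beta_{j_{\ell}}\rangle\langle (\beta_{j_{h}}^{\Delta})^{\vee}|w^{\Delta}(\gamma)\rangle,$$
which is exactly $\sum_{h\neq\ell}(-1)^{\chi(h>\ell)}\langle\beta_{j_h}|\beta_{j_{\ell}}\rangle\langle (\beta_{j_{h}}^{\Delta})^{\vee}|w^{\Delta}(\gamma)\rangle$, as required.

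There is no genuine obstacle in this step: everything is an immediate consequence of the two lemmas together with bilinearity, and the derivation holds for every $\gamma\in V$ and every $\ell$, so no extra hypothesis (such as $\Delta$ being a Cauchon diagram, or $\gamma$ being generic) is needed. The only point requiring care is the sign bookkeeping, i.e.\ checking that after absorbing the minus sign from Lemma~\ref{lem: rangle} the surviving factor is precisely $(-1)^{\chi(h>\ell)}$. This corollary is the key computational identity underlying Theorem~\ref{blmain}: read for all $\ell$ simultaneously, its left-hand side encodes the pairing of $(w+w^{\Delta})(\gamma)$ with the family $(\beta_{j_{\ell}})_{\ell}$, while its right-hand side is a matrix expression built from $M(w^{\Delta})$ applied to the coordinate vector $(\langle (\beta_{j_{h}}^{\Delta})^{\vee}|w^{\Delta}(\gamma)\rangle)_h$, which is how one will pass between $\ker(w+w^{\Delta})$ and $\ker(M(w^{\Delta}))$.
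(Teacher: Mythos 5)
Your proof is correct and follows exactly the same route as the paper's: apply Lemma~\ref{lem: id} to $\gamma$, pair with $\beta_{j_{\ell}}$, kill the $v_{\ell}^{\Delta}$ term and substitute the formula for $u_h^{\Delta}(\gamma)$ using Lemma~\ref{lem: rangle}, then absorb the resulting sign into $(-1)^{\chi(h>\ell)}$. The sign bookkeeping you spell out is precisely the only subtlety, and you have it right.
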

\begin{proof} Recall that Lemma~\ref{lem: id} gives
$$w+w^{\Delta} = v_{\ell}^{\Delta} + \sum_{h\neq \ell } (-1)^{\chi(h<\ell)} u_{h}^{\Delta}.$$
We apply both sides to $\gamma\in V$, and then apply the operator $\langle \,  \cdot \, | \beta_{j_{\ell}}\rangle$.    By Lemma \ref{lem: rangle}, this operator annihilates $v_{\ell}^{\Delta}(\gamma)$ and
sends $u_{h}^{\Delta}(\gamma)$ to 
$$- \langle (\beta_{j_{h}}^{\Delta})^{\vee}|w^{\Delta}(\gamma)\rangle \langle \beta_{j_h}|\beta_{j_{\ell}}
\rangle .$$ Thus 
$$\langle (w+w^{\Delta})(\gamma)|\beta_{j_{\ell}}\rangle = 
 \sum_{h\neq \ell} (-1)^{\chi(h>\ell)} \langle \beta_{j_h}|\beta_{j_{\ell}}\rangle \langle (\beta_{j_h}^{\Delta})^{\vee}|w^{\Delta}(\gamma)\rangle,$$ as required.
\end{proof}

\subsection{Proof of the main theorem}

We consider the map $B:V\to \mathbb{Q}^{m}$ defined by
$$B(\gamma)= \sum_{\ell=1}^m \langle\gamma|\beta_{j_{\ell}} \rangle \vb{e}_{\ell}.$$ 
The bulk of the work needed to prove Theorem~\ref{blmain} is contained in the following result.

\begin{prop}\label{thm: f}
Let $f:\mathbb{Q}^m\oplus V\to \mathbb{Q}^m$ be defined by
$$f(x,y):=\left(M(w^{\Delta})\right)(x)+B(y).$$
The function $f$ is surjective and a basis for the kernel is given by the elements of the form
$$\left(\sum_{h=1}^m \langle  (\beta_{j_{h}}^{\Delta})^{\vee} | w^{\Delta}(\varpi_j)\rangle  \vb{e}_h, (w+w^{\Delta})(\varpi_j)\right),$$ for $j\in [n]$.
\end{prop}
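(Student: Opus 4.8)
The plan is to analyze the map $f(x,y) = M(w^\Delta)(x) + B(y)$ by producing explicit elements of $\ker f$ and then showing they span the whole kernel via a dimension count. First I would verify that the proposed elements
$$\xi_j := \left(\sum_{h=1}^m \langle (\beta_{j_h}^\Delta)^\vee \mid w^\Delta(\varpi_j)\rangle \vb{e}_h,\ (w+w^\Delta)(\varpi_j)\right)$$
genuinely lie in the kernel. Applying $f$ and reading off the $\ell$-th coordinate, the $M(w^\Delta)$-part contributes $\sum_{h\neq\ell} \langle\beta_{j_h}\mid\beta_{j_\ell}\rangle\langle(\beta_{j_h}^\Delta)^\vee\mid w^\Delta(\varpi_j)\rangle$ with the skew-symmetric sign, while the $B$-part contributes $\langle(w+w^\Delta)(\varpi_j)\mid\beta_{j_\ell}\rangle$. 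Here I would have to be careful about the sign convention: $M(w^\Delta)$ is skew-symmetric with $(i,\ell)$-entry $\langle\beta_{j_i}\mid\beta_{j_\ell}\rangle$ for $i<\ell$, so $(M(w^\Delta)x)_\ell = \sum_{h<\ell}\langle\beta_{j_h}\mid\beta_{j_\ell}\rangle x_h - \sum_{h>\ell}\langle\beta_{j_h}\mid\beta_{j_\ell}\rangle x_h = -\sum_{h\neq\ell}(-1)^{\chi(h>\ell)}\langle\beta_{j_h}\mid\beta_{j_\ell}\rangle x_h$. Plugging in $x_h = \langle(\beta_{j_h}^\Delta)^\vee\mid w^\Delta(\varpi_j)\rangle$ and comparing with Corollary~\ref{cor: wdel}, which says exactly $\langle(w+w^\Delta)(\varpi_j)\mid\beta_{j_\ell}\rangle = \sum_{h\neq\ell}(-1)^{\chi(h>\ell)}\langle\beta_{j_h}\mid\beta_{j_\ell}\rangle\langle(\beta_{j_h}^\Delta)^\vee\mid w^\Delta(\varpi_j)\rangle$, the two contributions cancel. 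Thus each $\xi_j\in\ker f$. (If the signs do not cancel on the first try, the fix is to recheck the orientation convention in Theorem~\ref{theoBL} against Corollary~\ref{cor: wdel}.)

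Next I would establish surjectivity of $f$. Since $B(\gamma) = \sum_\ell \langle\gamma\mid\beta_{j_\ell}\rangle\vb{e}_\ell$, the image of $B$ is the span of $\{(\langle\varpi_1\mid\beta_{j_\ell}\rangle)_\ell,\dots\}$; more usefully, since $\beta_{j_1},\dots,\beta_{j_m}$ are among the distinct positive roots $\beta_1,\dots,\beta_t$ attached to the reduced decomposition and each $\beta_k$ is a nonzero vector, the pairing $\gamma\mapsto(\langle\gamma\mid\beta_{j_\ell}\rangle)_\ell$ from $V$ to $\mathbb{Q}^m$ is surjective provided the $\beta_{j_\ell}$ are linearly independent — which need not hold. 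The cleaner route is: $B$ is surjective onto $\mathbb{Q}^m$ if and only if the $\beta_{j_\ell}$ span an $m$-dimensional space, so instead I would argue surjectivity of the combined map $f$ directly. Actually the intended argument is almost surely a rank computation: $\operatorname{rank} M(w^\Delta) + \dim(\text{something}) $. I would show $\operatorname{im} f \supseteq \operatorname{im} M(w^\Delta) + \operatorname{im} B$ and that these together exhaust $\mathbb{Q}^m$ — the image of $M(w^\Delta)$ is the orthogonal complement (under the standard form) of $\ker M(w^\Delta)$, and one checks that $\operatorname{im} B$ together with $\operatorname{im} M(w^\Delta)$ cannot leave any room, because a vector in $(\operatorname{im} M(w^\Delta))^\perp = \ker M(w^\Delta)$ that is also orthogonal to $\operatorname{im} B$ would force, via the relation in Corollary~\ref{cor: wdel} read backwards, a contradiction. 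I expect this is the main obstacle and will require identifying precisely which combination makes the rank add up.

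Once surjectivity is in hand, the dimension count finishes the job: $\dim\ker f = \dim(\mathbb{Q}^m\oplus V) - m = m + n - m = n$. So it suffices to show the $n$ elements $\xi_1,\dots,\xi_n$ are linearly independent. This follows by projecting onto the second coordinate: a relation $\sum_j c_j\xi_j = 0$ forces $(w+w^\Delta)\bigl(\sum_j c_j\varpi_j\bigr) = 0$, i.e. $\sum_j c_j\varpi_j \in \ker(w+w^\Delta)$; but it also forces, via the first coordinate, $\sum_j c_j\langle(\beta_{j_h}^\Delta)^\vee\mid w^\Delta(\varpi_j)\rangle = 0$ for all $h$. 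Linear independence of the $\xi_j$ would fail only if $\sum_j c_j\varpi_j$ lies in a subspace on which both coordinates vanish simultaneously — and here I would invoke the observation that $w+w^\Delta$ restricted to $\ker(w+w^\Delta)^\perp$ is injective while the $\varpi_j$ form a basis, so in fact it is cleaner to note the $\xi_j$ are literally the images under a single injective linear map $V\to\mathbb{Q}^m\oplus V$, $\gamma\mapsto(\sum_h\langle(\beta_{j_h}^\Delta)^\vee\mid w^\Delta(\gamma)\rangle\vb{e}_h, (w+w^\Delta)(\gamma))$ of the basis $\varpi_1,\dots,\varpi_n$ — wait, this map is injective only if $w+w^\Delta$ is, which it is not. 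The correct statement is that the composite $V \to \mathbb{Q}^m\oplus V \to \ker f$ lands in $\ker f$ and has the right rank; I would show the map $\gamma\mapsto\xi_\gamma$ has kernel exactly $\ker(w+w^\Delta)\cap(\text{first-coordinate kernel})$, and that this intersection is zero because on $\ker(w+w^\Delta)$ the vector $w^\Delta(\gamma) = -w(\gamma)$ is a nonzero image of a root reflection acting on a nonzero $\gamma$, pairing nontrivially with some $(\beta_{j_h}^\Delta)^\vee$. With the $\xi_j$ shown to be $n$ linearly independent kernel elements and $\dim\ker f = n$, they form a basis, proving the Proposition.
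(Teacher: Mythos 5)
Your overall architecture matches the paper's: verify the proposed elements lie in $\ker f$ via Corollary~\ref{cor: wdel}, prove surjectivity of $f$ to deduce $\dim\ker f = n$, then show the $n$ elements are linearly independent. The kernel verification in your first paragraph is essentially the paper's argument and is fine. But the other two steps contain genuine gaps where you acknowledge uncertainty, and in both cases the paper has a specific idea you did not find.

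For surjectivity, you correctly notice that $B$ alone need not be onto (the $\beta_{j_\ell}$ can be linearly dependent), but your proposed workaround -- arguing about $(\operatorname{im}M(w^\Delta))^\perp$ and ``reading Corollary~\ref{cor: wdel} backwards'' -- never crystallizes into a proof. The paper's device is to evaluate $f$ at the mixed inputs $(\vb{e}_h,\beta_{j_h})$: one computes
\[
f(\vb{e}_h,\beta_{j_h}) = \langle\beta_{j_h},\beta_{j_h}\rangle\,\vb{e}_h + 2\sum_{\ell<h}\langle\beta_{j_\ell},\beta_{j_h}\rangle\,\vb{e}_\ell,
\]
which for $h=1,\dots,m$ is a triangular system with nonzero diagonal entries $\langle\beta_{j_h},\beta_{j_h}\rangle$, hence spans $\mathbb{Q}^m$. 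This is the piece you flagged as ``the main obstacle'' and left unfinished.

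For linear independence, you reduce to: if $\gamma = \sum_j c_j\varpi_j$ satisfies $(w+w^\Delta)(\gamma)=0$ and $\langle(\beta_{j_h}^\Delta)^\vee \mid w^\Delta(\gamma)\rangle = 0$ for all $h$, then $\gamma=0$. That reduction is right, but your explanation -- that $w^\Delta(\gamma)$ must ``pair nontrivially with some $(\beta_{j_h}^\Delta)^\vee$'' -- is an assertion, not a proof, and it is not at all obvious. The paper's actual argument is an induction: setting $\gamma_h^\Delta = w_h\cdots w_m(\gamma)$ and $\gamma_h = s_{j_h}w_h s_{j_{h+1}}w_{h+1}\cdots s_{j_m}w_m(\gamma)$, the vanishing conditions translate (via the isometry argument from Lemma~\ref{lem: rangle}) into $\gamma_h^\Delta$ lying in $\ker(s_{j_h}-\mathrm{id})$ for all $h$; a downward induction on $h$ then forces $\gamma_h^\Delta = \gamma_h$, which at $h=1$ gives $w^\Delta(\gamma)=w(\gamma)$. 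Combined with $w(\gamma)=-w^\Delta(\gamma)$, this yields $w^\Delta(\gamma)=0$ and hence $\gamma=0$. This nontrivial chain of reductions is the heart of the linear independence step, and it is missing from your proposal.
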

\begin{proof}
Note that
$$f( \vb{e}_h,\beta_{j_h})=\sum_{\ell\neq h} (-1)^{\chi(\ell>h)} \langle \beta_{j_\ell},\beta_{j_h}\rangle  \vb{e}_\ell 
+\sum_{\ell=1}^m \langle \beta_{j_\ell},\beta_{j_h}\rangle  \vb{e}_\ell 
=  \langle \beta_{j_h},\beta_{j_h}\rangle  \vb{e}_h + 2\sum_{\ell<h} \langle \beta_{j_\ell},\beta_{j_h}\rangle  \vb{e}_\ell .$$
Since $\alpha\neq 0$ for any root $\alpha$, these elements span $\mathbb{Q}^m$ as $h$ ranges from $1$ to $m$.  
Therefore, $f$ is onto. For this reason, we know that the dimension of the kernel of $f$ is $n$.  Thus to see that the elements
$$\left(\sum_{h=1}^m \langle  (\beta_{j_{h}}^{\Delta})^{\vee} | w^{\Delta}(\varpi_j)\rangle  \vb{e}_h, (w+w^{\Delta})(\varpi_j)\right),$$ for $j=1,2,\ldots ,n$, form a basis, it is enough to show that they are linearly independent and are in the kernel. 
Note that by Corollary \ref{cor: wdel} we have that for any $\gamma\in V$,
$$B((w+w^{\Delta})(\gamma))=\sum_{\ell=1}^m \left( \sum_{h\neq \ell} (-1)^{\chi(h>\ell)} \langle \beta_{j_h}|\beta_{j_{\ell}}\rangle \langle (\beta_{j_{h}}^{\Delta})^{\vee}|w^{\Delta}(\gamma)\rangle\right)  \vb{e}_\ell.$$
But this is just $$-M(w^{\Delta})\left( \sum_{h=1}^m \langle  (\beta_{j_{h}}^{\Delta})^{\vee} | w^{\Delta}(\gamma)\rangle  \vb{e}_h\right),$$ and so these elements are indeed in the kernel.  

Now for $a_i\in \B{Q}$, with $i\in[n]$, consider the linear combination 
\begin{eqnarray*}
&&\sum_{i=1}^na_i\left(\sum_{h=1}^m \langle  (\beta_{j_{h}}^{\Delta})^{\vee} | w^{\Delta}(\varpi_i)\rangle  \vb{e}_h, (w+w^{\Delta})(\varpi_i)\right) \\
&&=  \left(\sum_{h=1}^m \langle  (\beta_{j_{h}}^{\Delta})^{\vee} | w^{\Delta}\left(\sum_{i=1}^na_i\varpi_i\right)\rangle  \vb{e}_h, (w+w^{\Delta})\left(\sum_{i=1}^na_i\varpi_i\right)\right)\\
&&= \left(\sum_{h=1}^m \langle  (\beta_{j_{h}}^{\Delta})^{\vee} | w^{\Delta}(\gamma)\rangle  \vb{e}_h, (w+w^{\Delta})(\gamma)\right),
\end{eqnarray*}
where $\gamma:= a_1 \varpi_1 + \dots +a_n \varpi_n$. Therefore, to check that the kernel elements in the statement of the theorem are linearly independent, it suffices to show that if $\gamma\in {\rm ker}(w+w^{\Delta})$ and $\langle   (\beta_{j_{h}}^{\Delta})^{\vee} | w^{\Delta}(\gamma)\rangle = 0$ for all $h\in[m]$, then $\gamma$ must be zero.  

Suppose that $\gamma\in  {\rm ker}(w+w^{\Delta})$  satisfies
$$\langle (\beta_{j_h}^{\Delta})^{\vee}|w^{\Delta}(\gamma)\rangle = 0$$ for all $h\in[m]$.
Equivalently,
$$\langle w_0\cdots w_{h-1}(\alpha_{j_h}^{\vee})|w_0\cdots w_m(\gamma)\rangle = 0$$ for
 all $h\in[m]$.  
Since the $w_i$ are isometries, we see that $w_h\cdots w_m(\gamma)$ is annihilated by the linear functional $\alpha_{j_h}^{\vee}$ and hence
$$w_h\cdots w_m(\gamma) \in {\rm Span}\{\varpi_i\mid i \neq j_h\}=\ker(s_{j_h}-{\rm id}).$$
In particular, $s_{j_h}w_h\cdots w_m(\gamma)=w_h\cdots w_m(\gamma)$ for all $h\in[m]$.

To finish the proof, it suffices to prove that $w(\gamma)=w^{\Delta} (\gamma)$, as we have $w(\gamma)=-w^{\Delta}(\gamma)$ by assumption and  so linear independence will follow.
%\begin{claim}
%$w(\gamma)=w^{\Delta}(\gamma)$. 
%\end{claim}
%Linear independence will follow since we already know that $w(\gamma)=-w^\Delta(\gamma)$ by assumption.
For $h\in[m]$ we let
$$\gamma_h^{\Delta}=w_h\cdots w_m(\gamma)$$ and
$$\gamma_h =s_{j_h}w_h s_{j_{h+1}} w_{h+1}\cdots s_{j_m} w_m(\gamma).$$  
We now show that for any $h\in[m]$, we have $\gamma_h^{\Delta}=\gamma_h.$

To this end, note that
$s_{j_m}w_m(\gamma)=w_m(\gamma)$, and so $\gamma_m=\gamma_m^{\Delta}$.  Suppose that there exists some $h$ such that $\gamma_h^{\Delta}\neq \gamma_h$. If $h$ is chosen to be maximal with respect to the property 
$\gamma_h^{\Delta}\neq \gamma_h$,
then 
\begin{eqnarray*}
\gamma_h^{\Delta} & = &  w_h\cdots w_m(\gamma)\\
&=& s_{j_h}w_h\cdots w_m(\gamma) \\
&=& s_{j_h}w_h(\gamma_{h+1}^{\Delta}) \\
&=& s_{j_h} w_h (\gamma_{h+1}) \\
&=& \gamma_h.\end{eqnarray*}
This is a contradiction.  Thus
$\gamma_h^{\Delta}=\gamma_h$ for $1\le h\le m$.  In particular, if we take $h=1$, then we see that
$$w^{\Delta}(\gamma)=w_0(\gamma_1^{\Delta})=w_0(\gamma_1)=w(\gamma),$$ which proves the proposition.

\end{proof}

We are now in position to prove our main result.

\begin{thm}
\label{theoBCL}
With the notation of Sections \ref{not1} and \ref{not2}, let $\Psi: {\rm ker}(w+w^{\Delta})\to {\rm ker}(M(w^{\Delta}))$ be defined by setting
$$\Psi(\gamma):=\sum_{h=1}^m \langle (\beta_{j_h}^{\Delta})^{\vee}|w^{\Delta}(\gamma)\rangle \vb{e}_h.$$
Then $\psi$ is an isomorphism.
\end{thm}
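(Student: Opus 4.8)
\textbf{Proof plan for Theorem~\ref{theoBCL}.}
The strategy is to obtain $\Psi$ and its inverse directly from Proposition~\ref{thm: f}, which already did all the serious work: it established that $f:\mathbb{Q}^m\oplus V\to\mathbb{Q}^m$, $(x,y)\mapsto M(w^\Delta)(x)+B(y)$, is surjective with an explicit basis of $\ker f$ consisting of the vectors
$$\left(\sum_{h=1}^m \langle (\beta_{j_h}^{\Delta})^{\vee}|w^{\Delta}(\varpi_j)\rangle \vb{e}_h,\ (w+w^{\Delta})(\varpi_j)\right),\qquad j\in[n].$$
First I would record that $\Psi$ is well-defined, i.e. that $\Psi(\gamma)\in\ker(M(w^\Delta))$ whenever $\gamma\in\ker(w+w^\Delta)$: this is precisely the identity $B((w+w^{\Delta})(\gamma))=-M(w^{\Delta})\bigl(\sum_{h}\langle (\beta_{j_h}^{\Delta})^{\vee}|w^{\Delta}(\gamma)\rangle\vb{e}_h\bigr)$ proved inside Proposition~\ref{thm: f}, specialised to $\gamma$ with $(w+w^\Delta)(\gamma)=0$, which forces $M(w^\Delta)(\Psi(\gamma))=0$. (By linearity it suffices to check this on the spanning set $\{\varpi_j\}\cap\ker(w+w^\Delta)$, but in fact the formula holds for all $\gamma\in V$, so no case analysis is needed.)

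Next I would prove injectivity of $\Psi$. This is exactly the linear-independence argument carried out in the second half of the proof of Proposition~\ref{thm: f}: if $\gamma\in\ker(w+w^\Delta)$ and $\langle(\beta_{j_h}^{\Delta})^{\vee}|w^{\Delta}(\gamma)\rangle=0$ for all $h\in[m]$, then $\Psi(\gamma)=0$ by definition, and the chain of reductions ($w_h\cdots w_m(\gamma)\in\ker(s_{j_h}-\mathrm{id})$, hence $\gamma_h^\Delta=\gamma_h$ by downward induction on $h$, hence $w^\Delta(\gamma)=w(\gamma)=-w^\Delta(\gamma)$, hence $w^\Delta(\gamma)=0$, hence $\gamma=0$ since $w^\Delta$ is invertible) shows $\gamma=0$. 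So $\ker\Psi=0$.

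For surjectivity and the dimension count, I would argue as follows. The map $\gamma\mapsto\bigl(\Psi(\gamma),(w+w^\Delta)(\gamma)\bigr)$ sends $V$ into $\mathbb{Q}^m\oplus V$, and restricted to $\ker(w+w^\Delta)$ it lands in $(\ker M(w^\Delta))\oplus\{0\}$; moreover on all of $V$ the image lies in $\ker f$ only when $\gamma\in\ker(w+w^\Delta)$ — but more to the point, Proposition~\ref{thm: f} tells us $\dim\ker f=n$ and exhibits the $n$ basis vectors above. The second coordinate of the $j$-th basis vector is $(w+w^\Delta)(\varpi_j)$, and since the $\varpi_j$ span $V$ and the basis vectors are linearly independent, the composite ``second coordinate'' map $\ker f\to V$ has image $(w+w^\Delta)(V)$ and kernel of dimension $\dim\ker(w+w^\Delta)$; comparing, $n-\dim(w+w^\Delta)(V)=\dim\ker(w+w^\Delta)$, which is just rank–nullity and gives nothing new. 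The clean way: a basis vector of $\ker f$ has vanishing second coordinate iff $\varpi_j$-combination $\gamma$ lies in $\ker(w+w^\Delta)$, and then its first coordinate is exactly $\Psi(\gamma)$; the set of such first coordinates is therefore $\Psi(\ker(w+w^\Delta))$, and these lie in $\ker M(w^\Delta)$. Conversely any $x\in\ker M(w^\Delta)$ gives $(x,0)\in\ker f$, so $(x,0)$ is a $\mathbb{Q}$-combination of the basis vectors; the vanishing of the second coordinate of that combination says the corresponding $\gamma\in\ker(w+w^\Delta)$, and the first coordinate identity then reads $x=\Psi(\gamma)$. Hence $\Psi$ is onto $\ker M(w^\Delta)$. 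Combined with injectivity, $\Psi$ is an isomorphism, and incidentally $\dim\ker M(w^\Delta)=\dim\ker(w+w^\Delta)$, which is Theorem~\ref{blmain}.

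\textbf{Main obstacle.} The genuinely delicate point has already been absorbed into Proposition~\ref{thm: f} and its Lemma~\ref{lem: rangle}/Corollary~\ref{cor: wdel} inputs — namely the interplay between the $\beta^\Delta$ (defined via the truncated products $s_{i_1}^\Delta\cdots s_{i_{k-1}}^\Delta$) and the factorisation $w=w_0 s_{j_1}w_1\cdots s_{j_m}w_m$, and in particular the inductive descent showing $\gamma_h^\Delta=\gamma_h$. Given that Proposition, the remaining work for this theorem is essentially bookkeeping: re-reading the well-definedness and linear-independence computations as the statements ``$\Psi$ is well-defined'' and ``$\Psi$ is injective'', and extracting surjectivity from the explicit basis of $\ker f$ by tracking which basis combinations have zero second coordinate. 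I would therefore keep the proof short, citing Proposition~\ref{thm: f} for each of the three properties rather than re-deriving them.
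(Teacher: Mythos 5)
Your proposal is correct and follows essentially the same route as the paper's own proof: well-definedness and surjectivity are read off from the explicit basis of $\ker f$ established in Proposition~\ref{thm: f}, and injectivity is exactly the linear-independence computation already carried out there. The only difference is presentational (you spend a paragraph discussing and then discarding a rank--nullity detour); the substance is identical.
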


\begin{proof}
Note that if $\gamma$ is in the kernel of $w+w^{\Delta}$, then $\Psi(\gamma)$ is in ${\rm ker}(M(w^{\Delta}))$ by Proposition \ref{thm: f}.  Moreover, $\Psi$ is surjective, since if $x$ is in the kernel of $M(w^{\Delta})$, then $f(x,0)$ must be zero and by assumption $(x,0)$ is in the span of
the elements
$$\left(\sum_{h=1}^m \langle  (\beta_{j_{h}}^{\Delta})^{\vee} | w^{\Delta}(\varpi_j)\rangle \vb{e}_h, (w+w^{\Delta})(\varpi_j)\right),$$ for $j=1,2,\ldots ,m$, which means 
that $x=\Psi(\gamma)$ for some $\gamma$ in the kernel of $w+w^{\Delta}$.  We show that $\Psi$ is injective, which will complete the proof of Theorem~\ref{blmain}. 

For $\gamma\in  {\rm ker}(w+w^{\Delta})$ in the kernel of $\Psi$ we have
$$\langle (\beta_{j_h}^{\Delta})^{\vee}|w^{\Delta}(\gamma)\rangle = 0$$ for $1\le h\le m$. We showed, however, that this could not occur in the proof of the  linear independence of the kernel of the map $f$ in the proof of Proposition \ref{thm: f}.

\end{proof}

\subsection{Inverse bijection}

For completeness, we construct an inverse $\Phi$ to the $\Psi$ from the statement of Theorem \ref{theoBCL}.
Define $\Phi : {\mathbb Q}^m \to V$ by 
$$\Phi(\vb{e}_h) = \frac{1}{2}(w^{\Delta})^{-1}(\beta_{j_h})$$ for $h=1,\ldots ,m$. 

Note that since the map $\Psi$ is onto, if $x=\sum_{h=1}^m c_h \vb{e}_h\in {\rm ker}(M(w^{\Delta}))$, then there is some $\gamma$ in ${\rm ker}(w+w^{\Delta})$ such that 
$$c_h = \langle (\beta_{j_h}^{\Delta})^{\vee}|w^{\Delta}(\gamma)\rangle.$$

Now one can easily use the definitions to check that $u_1^\Delta=v_1^\Delta-2w^\Delta.$ 
But we know $(w+w^{\Delta})(\gamma)=0$. These facts, together with Lemma~\ref{lem: id}, give us
\begin{eqnarray*}
-2w^{\Delta}(\gamma) &=& (w+w^\Delta - 2w^{\Delta})(\gamma)\\
&=&(v_1^\Delta+\sum_{h>1}u_h^\Delta - 2w^\Delta)(\gamma)\\
&=& \sum_{h=1}^m u_{h}^{\Delta}(\gamma) \\
&=& -\sum_{h=1}^m \langle (\beta_{j_h}^{\Delta})^{\vee}|w^{\Delta}(\gamma)\rangle \beta_{j_h}\\
&=& -\sum_{h=1}^m c_h \beta_{j_h}.
\end{eqnarray*}
Thus 
$$\gamma=\frac{1}{2}\left((w^{\Delta})^{-1}\left(\sum_{h=1}^m c_h \beta_{j_h}\right)\right)=\Phi(x).$$
Thus we have shown that $\Phi$ induces a linear map $\tilde{\Phi} : {\rm ker}(M(w^{\Delta})) \to {\rm ker}(w+w^{\Delta})$ 
such that 
$$\Psi\circ \tilde{\Phi} = {\rm id}|_{{\rm ker}(w+w^{\Delta})},$$
and so we immediately obtain the following theorem.
\begin{thm} \label{inverses}
For the functions $\Psi$ and $\tilde{\Phi}$ defined above, we have 
$$\Psi\circ \tilde{\Phi}={\rm id}|_{{\rm ker}(w+w^{\Delta})} \mbox{ and }
\tilde{\Phi}\circ \Psi={\rm id}|_{{\rm ker}(M(w^{\Delta}))}.$$
\end{thm}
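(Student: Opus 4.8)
The plan is to verify the two identities in Theorem~\ref{inverses} by assembling facts already established. We have constructed $\Psi:\ker(w+w^\Delta)\to\ker(M(w^\Delta))$ in Theorem~\ref{theoBCL}, where it was shown to be a well-defined isomorphism, and $\tilde\Phi:\ker(M(w^\Delta))\to\ker(w+w^\Delta)$ in the preceding discussion, via $\tilde\Phi(x)=\tfrac12(w^\Delta)^{-1}\!\bigl(\sum_h c_h\beta_{j_h}\bigr)$ where the $c_h$ are the coordinates of $x$. So the first task is simply to record that these maps are well-defined and that the computation in the ``Inverse bijection'' subsection already proves $\Psi\circ\tilde\Phi=\mathrm{id}$ on $\ker(w+w^\Delta)$: indeed, starting from $x=\sum c_h\vb{e}_h\in\ker(M(w^\Delta))$, surjectivity of $\Psi$ gives $\gamma\in\ker(w+w^\Delta)$ with $c_h=\langle(\beta_{j_h}^\Delta)^\vee|w^\Delta(\gamma)\rangle$, i.e. $\Psi(\gamma)=x$; then the chain of equalities using $u_1^\Delta=v_1^\Delta-2w^\Delta$, Lemma~\ref{lem: id}, and Lemma~\ref{lem: rangle} shows $\tilde\Phi(x)=\gamma$, hence $\Psi(\tilde\Phi(x))=\Psi(\gamma)=x$.

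Next I would establish the reverse composite $\tilde\Phi\circ\Psi=\mathrm{id}$ on $\ker(M(w^\Delta))$. The cleanest route is an abstract one: $\Psi$ is a bijection between $\ker(w+w^\Delta)$ and $\ker(M(w^\Delta))$ by Theorem~\ref{theoBCL} (injective by the linear-independence argument in Proposition~\ref{thm: f}, surjective by the argument recalled above). A map with a two-sided-candidate inverse is handled by: if $\Psi$ is bijective and $\Psi\circ\tilde\Phi=\mathrm{id}$ on the target, then $\tilde\Phi=\Psi^{-1}$, so automatically $\tilde\Phi\circ\Psi=\mathrm{id}$ on the source. Concretely, for $\gamma\in\ker(w+w^\Delta)$ set $x=\Psi(\gamma)$; applying $\Psi\circ\tilde\Phi=\mathrm{id}$ to $x$ gives $\Psi(\tilde\Phi(\Psi(\gamma)))=\Psi(\gamma)$, and injectivity of $\Psi$ yields $\tilde\Phi(\Psi(\gamma))=\gamma$. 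This avoids any further explicit computation.

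Finally I would note that $\dim$ of the two kernels are therefore equal, which is the content of Theorem~\ref{blmain}, and combining with Theorem~\ref{theoBL} gives the dimension formula $\dim(\spec_J(U_q[w]))=\dim(\ker(w+w^\Delta))$. The only point requiring a little care throughout is the bookkeeping of well-definedness: one must check that $\tilde\Phi$ as defined on basis vectors $\vb{e}_h$ of $\mathbb{Q}^m$ really does carry $\ker(M(w^\Delta))$ into $\ker(w+w^\Delta)$ rather than merely into $V$ --- but this is exactly what the displayed computation $\gamma=\tfrac12(w^\Delta)^{-1}(\sum c_h\beta_{j_h})=\Phi(x)$ shows, since $\gamma$ was chosen in $\ker(w+w^\Delta)$ to begin with and depends only on $x$ (not on a choice of preimage, by injectivity of $\Psi$).

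I do not expect a serious obstacle here: the substantive work --- surjectivity and injectivity of $\Psi$, and the identity $\Psi\circ\tilde\Phi=\mathrm{id}$ --- is already carried out in Proposition~\ref{thm: f}, Theorem~\ref{theoBCL}, and the ``Inverse bijection'' discussion. The mild hazard is purely expository: making sure the logical dependence is acyclic (surjectivity of $\Psi$ is used to define $\tilde\Phi$, so one cannot circularly invoke $\tilde\Phi$ when proving properties of $\Psi$), which the argument above respects by deriving $\tilde\Phi\circ\Psi=\mathrm{id}$ formally from bijectivity of $\Psi$ together with $\Psi\circ\tilde\Phi=\mathrm{id}$.
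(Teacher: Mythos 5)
Your proof is correct and follows essentially the same route as the paper: the explicit computation in the ``Inverse bijection'' discussion (using $u_1^\Delta=v_1^\Delta-2w^\Delta$, Lemma~\ref{lem: id} and Lemma~\ref{lem: rangle}) shows that $\tilde\Phi$ sends $x=\Psi(\gamma)$ back to $\gamma$, and the companion composite identity then follows formally from the bijectivity of $\Psi$ established in Theorem~\ref{theoBCL}. One small remark: both your write-up and the paper's statement of Theorem~\ref{inverses} have the restriction subscripts transposed --- since $\Psi\colon\ker(w+w^\Delta)\to\ker(M(w^\Delta))$ and $\tilde\Phi$ goes the other way, the correct reading is $\Psi\circ\tilde\Phi=\mathrm{id}|_{\ker(M(w^\Delta))}$ and $\tilde\Phi\circ\Psi=\mathrm{id}|_{\ker(w+w^\Delta)}$.
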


\subsection{An example}

Suppose that $\F{g}$ is of type $A_4$ with $w=s_1s_2s_1s_3s_2s_4s_3s_2$.   We have
\begin{eqnarray*}
\beta_1&=&\alpha_1;\\
\beta_2&=&\alpha_1+\alpha_2;\\
\beta_3&=&\alpha_2;\\
\beta_4&=&\alpha_1+\alpha_2+\alpha_3;\\
\beta_6&=&\alpha_2+\alpha_3;\\
\beta_6&=&\alpha_1+\alpha_2+\alpha_3+\alpha_4;\\
\beta_7&=&\alpha_2+\alpha_3+\alpha_4;\\
\beta_8&=&\alpha_4.
\end{eqnarray*}

Let $\Delta=\{2\}$. One can easily check that $\Delta$ is a Cauchon diagram. In this case, one has $w^\Delta = s_2$. We may write $$w=w_0s_1w_1s_1s_3s_2s_4s_3s_2w_2$$ where $w_0=w_2=\mathrm{id}$ and $w_1=s_2$. Moreover, one can check that  

$$M(w^\Delta)=\begin{bmatrix} 0& -1& 1& -1& 1& -1& 0 \\ 1&0&0&1&0&1&0 \\ -1&0&0&1&1&0&-1\\ 1&-1&-1&0&0&1&-1\\ -1&0&-1&0&0&1&1\\1&-1&0&-1&-1&0&1 \\ 0&0&1&1&-1&-1&0\end{bmatrix}.$$ 

On the other hand, it is easy to compute the $\beta_i^\Delta$:
\begin{eqnarray*}
\beta_1^\Delta&=&\alpha_1;\\
\beta_2^\Delta&=&\alpha_2;\\
\beta_3^\Delta&=&\alpha_1+\alpha_2;\\
\beta_4^\Delta&=&\alpha_2+\alpha_3;\\
\beta_5^\Delta&=&-\alpha_2;\\
\beta_6^\Delta&=&\alpha_4;\\
\beta_7^\Delta&=&\alpha_2 + \alpha_3;\\
\beta_8^\Delta&=&-\alpha_2;\\
\end{eqnarray*}
So the matrix of $w+w^\Delta$ is the basis of simple roots is given by
$$w+w^\Delta= \begin{bmatrix} 1 & 0 & 0 & -1 \\
1 & -1 & 0 & 0 \\
1 & 0 & 0 & 0\\
1 & -1 & 0 & 1\\
\end{bmatrix}$$
The kernel of this linear map is spanned by $\alpha_3$. One can easily check that:
$$\psi(\alpha_3)=-\vb{e}_1+2\vb{e}_3-\vb{e}_4-\vb{e}_5+2\vb{e}_6-\vb{e}_7, $$ 
and that this vector actually spans the kernel of the matrix $M(w^\Delta)$.

\subsection{Dimension of $\hc$-strata in $U_q[w]$}

As a consequence of Theorems \ref{theoBL} and \ref{theoBCL}, we obtain the following formula for the dimension 
of the $\hc$-stratum associated to the $\hc$-prime attached to the Cauchon diagram $\Delta$.

\begin{thm}
Let $J$ be the unique $\hc$-prime of $U_q[w]$ associated to the Cauchon diagram $\Delta$. Then, with the notation of the previous sections, we have:
$$\dim (\spec_J(U_q[w]) ) = \dim (\ker(w^\Delta +w)).$$
\end{thm}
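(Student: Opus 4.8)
The statement to be proved is the final theorem, asserting that $\dim(\spec_J(U_q[w])) = \dim(\ker(w^\Delta + w))$, where $J$ is the $\hc$-prime attached to the Cauchon diagram $\Delta$. The plan is to simply combine the two main ingredients already in hand. By Theorem~\ref{theoBL} (quoted from \cite{BL}), one has $\dim(\spec_J(U_q[w])) = \dim(\ker(M(w^\Delta)))$, where $M(w^\Delta)$ is the $m\times m$ skew-symmetric matrix with $(i,\ell)$-entry $\langle \beta_{j_i} \mid \beta_{j_\ell}\rangle$. By Theorem~\ref{theoBCL} (equivalently Theorem~\ref{blmain}, and made fully explicit via Theorem~\ref{inverses}), the map $\Psi$ is an isomorphism $\ker(w+w^\Delta) \xrightarrow{\ \sim\ } \ker(M(w^\Delta))$, so in particular $\dim(\ker(M(w^\Delta))) = \dim(\ker(w+w^\Delta))$. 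Chaining these two equalities and using commutativity of $+$ (so $\ker(w+w^\Delta) = \ker(w^\Delta + w)$) gives the result immediately.

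\textbf{Steps in order.} First, I would invoke Theorem~\ref{theoBL} to reduce the dimension of the stratum to the dimension of the kernel of $M(w^\Delta)$; this is legitimate because the preceding discussion established that $\Delta$ being a Cauchon diagram places us exactly in the situation covered by \cite{BL}, via the M\'eriaux--Cauchon parametrisation. Second, I would apply Theorem~\ref{theoBCL}, which provides the isomorphism $\Psi$ between $\ker(w + w^\Delta)$ and $\ker(M(w^\Delta))$; since an isomorphism of $\mathbb{Q}$-vector spaces preserves dimension, the two kernels have the same dimension. Third, I would note that $w^\Delta + w = w + w^\Delta$ as linear operators on $V$, so the two kernels literally coincide. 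Assembling, $\dim(\spec_J(U_q[w])) = \dim(\ker(M(w^\Delta))) = \dim(\ker(w+w^\Delta)) = \dim(\ker(w^\Delta+w))$, which is the claim.

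\textbf{Main obstacle.} There is essentially no obstacle at this stage: all the substance has been absorbed into Theorem~\ref{theoBCL}, whose proof (through Proposition~\ref{thm: f} and the technical Lemmas~\ref{lem: id} and \ref{lem: rangle}) is the genuinely hard part of the paper. If anything required care it would be making sure the hypotheses of Theorem~\ref{theoBL} are met for the diagram $\Delta$ under consideration — but this is exactly the content of the cited result of M\'eriaux--Cauchon together with \cite[Theorem~3.1 and Section~3C]{BL}, which identifies Cauchon diagrams with the relevant combinatorial data and expresses the stratum dimension as $\dim(\ker(M(w^\Delta)))$. So the theorem follows formally, and the proof is a one-line deduction.
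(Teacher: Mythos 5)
Your proposal is correct and matches the paper's own reasoning exactly: the theorem is stated as an immediate consequence of Theorem~\ref{theoBL} (stratum dimension equals $\dim\ker M(w^\Delta)$) and Theorem~\ref{theoBCL} (the isomorphism $\Psi$ between $\ker(w+w^\Delta)$ and $\ker M(w^\Delta)$). Nothing more is needed.
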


As a consequence of this result we obtain that the dimension of an $\hc$-stratum is always less than or equal to the rank of the underlying simple Lie algebra. 

A similar result has been obtained independently, and with completely different methods, by Yakimov \cite{Y2} under the assumption that $\mathbb{K}$ is a field of characteristic $0$ and that $q$ is transcendental over $\mathbb{Q}$.

When $\g$ is of type $A_n$ and $w$ is a certain element of $W$, the algebra $U_q[w]$ is isomorphic to the algebra of $p\times m$ quantum matrices. We refer the interested reader to \cite[Section 2.1]{CM} for more details. The results obtained here, and in particular Theorem \ref{theoBCL} will be used in a subsequent paper \cite{BCL1} in order to complete the current project \cite{BLL,BLN} of enumerating primitive $\hc$-primes in quantum matrices.

\section*{Acknowledgements}

We would like to thank Milen Yakimov for telling us about the results in \cite{Y2}.

\newpage

\noindent Jason Bell:\\
Department of Mathematics\\
Simon Fraser University\\
Burnaby, BC V5A 1S6, Canada\\
Email: {\tt jpb@math.sfu.ca}\\

\noindent Karel Casteels:\\
Department of Mathematics\\
University of California\\
Santa Barbara, CA 93106, USA\\
Email: {\tt casteels@math.ucsb.edu}\\

\noindent St\'ephane Launois \\
School of Mathematics, Statistics \& Actuarial science \\
University of Kent\\
Canterbury, Kent CT2 7NF, United Kingdom\\
Email: {\tt S.Launois@kent.ac.uk}

\end{document}